\newcommand\bC{{\mathbf C}}
\newcommand\cA{{\mathcal A}}
\newcommand\cC{{\mathcal C}}
\newcommand\cD{{\mathcal D}}
\newcommand\cF{{\mathcal F}}
\newcommand\cG{{\mathcal G}}
\newcommand\cH{{\mathcal H}}
\theoremstyle{plain}
\newtheorem{theorem}{Theorem}[section]
\newtheorem{lemma}[theorem]{Lemma}
\newtheorem{conjecture}[theorem]{Conjecture}
\newtheorem{proposition}[theorem]{Proposition}
\theoremstyle{definition}
\newtheorem{claim}[theorem]{Claim}
\newcommand\lref[1]{Lemma~\ref{lem:#1}}
\newcommand\tref[1]{Theorem~\ref{thm:#1}}
\newcommand\cref[1]{Corollary~\ref{cor:#1}}
\newcommand\clref[1]{Claim~\ref{clm:#1}}
\newcommand\cjref[1]{Conjecture~\ref{conj:#1}}
\newcommand\pref[1]{Proposition~\ref{prop:#1}}
\title{Forbidden subposet problems for traces of set families}
\author{D\'aniel Gerbner\thanks{Research supported by the J\'anos Bolyai Research Fellowship of the Hungarian Academy of Sciences and the National Research, Development and Innovation Office -- NKFIH under the grant the grant K 116769.}, Bal\'azs Patk\'os\thanks{Research supported by the J\'anos Bolyai Research Fellowship of the Hungarian Academy of Sciences and the National Research, Development and Innovation Office -- NKFIH under the grants SNN 116095 and K 116769.}, M\'at\'e Vizer\thanks{Research supported by the National Research, Development and Innovation Office -- NKFIH under the grants SNN 116095.}\\
\small Alfr\'ed R\'enyi Institute of Mathematics, Hungarian Academy of Sciences\\
\small P.O.B. 127, Budapest H-1364, Hungary.\\
\smallskip
\small \texttt{gerbner,patkos@renyi.hu, vizermate@gmail.com}\\
\smallskip }
\begin{document}

\maketitle

\begin{abstract}
In this paper we introduce a problem that bridges forbidden subposet and forbidden subconfiguration problems. 
The sets $F_1,F_2, \dots,F_{|P|}$ form a \textit{copy} of a poset $P$, if there exists a bijection $i:P\rightarrow \{F_1,F_2, \dots,F_{|P|}\}$ such that for any $p,p'\in P$ the relation $p<_P p'$ implies $i(p)\subsetneq i(p')$. A family $\cF$ of sets is \textit{$P$-free} if it does not contain any copy of $P$. The trace of a family $\cF$ on a set $X$ is $\cF|_X:=\{F\cap X: F\in \cF\}$.

We introduce the following notions: $\cF\subseteq 2^{[n]}$ is \textit{$l$-trace $P$-free} if for any $l$-subset $L\subseteq [n]$, the family $\cF|_L$ is $P$-free and $\cF$ is \textit{trace $P$-free} if it is $l$-trace $P$-free for all $l\le n$. As the first instances of these problems we determine the maximum size of trace $B$-free families, where $B$ is the butterfly poset on four elements $a,b,c,d$ with $a,b<c,d$ and determine the asymptotics of  the maximum size of $(n-i)$-trace $K_{r,s}$-free families for $i=1,2$. We also propose a generalization of the main conjecture of the area of forbidden subposet problems.
\end{abstract}

\noindent
{\bf Keywords:} forbidden subposet problem, trace of a set family, butterfly poset 

\vspace{2mm}

\noindent
{\bf AMS Subj.\ Class.\ (2010)}: 06A07, 05D05

\section{Introduction}

In this paper we introduce a problem that bridges two areas of extremal finite set theory, namely forbidden subposet problems and traces of set families. We denote by $[n]$ the set of the first $n$ positive integers and for a set $X$ we use the notation $2^{X}, \binom{X}{k},\binom{X}{\le k}, \binom{X}{\ge k}$ to denote the family of all subsets of $X$, all subsets of $X$ of size $k$ (that we also call $k$-subsets of $X$), all subsets  of $X$ of size at most $k$, and all subsets  of $X$ of size at least $k$, respectively. The family $\binom{X}{k}$ is often called the \textit{$k$th level of $X$}. 
Throughout the paper we use standard order notions.

\vskip 0.2truecm

\noindent
We will use multiple times the following well-known fact: $$\left| \binom{[n]}{\le \lfloor n/2- n^{2/3}\rfloor}\cup \binom{[n]}{\ge \lfloor n/2+ n^{2/3}\rfloor} \right|= o \left(\frac{1}{n}\binom{n}{\lfloor n/2\rfloor} \right).$$ 

Using this, we will assume several times throughout the paper that all members of a family $\cF$ have cardinalities in the interval $[n/2-n^{2/3},n/2+n^{2/3}]$ (as this way we lose only $o (\frac{1}{n}\binom{n}{\lfloor n/2\rfloor})$ sets). Note that for our purposes it is always going to be enough to use the interval $[n/3,2n/3]$ instead of $[n/2-n^{2/3},n/2+n^{2/3}]$.

\vskip 0.4truecm

\textbf{Forbidden subposet problems}. The very first result in extremal finite set theory is due to Sperner \cite{S}, who proved that if a family $\cF\subseteq 2^{[n]}$ does not contain two sets in inclusion, then the size of $\cF$ is at most $\binom{n}{\lfloor n/2\rfloor}$ and the only families achieving this size are $\binom{[n]}{\lfloor n/2\rfloor}$ and $\binom{[n]}{\lceil n/2\rceil}$. This was later generalized by Erd\H os \cite{Er}, who showed that if $\cF \subseteq 2^{[n]}$ does not contain a \textit{chain} of length $k+1$ (i.e. nested sets $F_1\subsetneq F_2 \subsetneq \dots \subsetneq F_{k+1}$), then the size of $\cF$ is at most $\sum_{i=1}^k\binom{n}{\lfloor \frac{n-k}{2}\rfloor +i}$, the sum of the $k$ largest binomial coefficients of order $n$. There is a vast literature of Sperner type problems (see the not very recent monograph of Engel \cite{En}), we focus on forbidden subposet problems introduced by Katona and Tarj\'an \cite{KT}. We say that the sets $F_1,F_2, \dots,F_{|P|}$ form a \textit{copy} of a poset $P$, if there exists a bijection $i:P\rightarrow \{F_1,F_2, \dots,F_{|P|}\}$ such that for any $p,p'\in P$ the relation $p<_P p'$ implies $i(p)\subsetneq i(p')$. A family $\cF$ of sets is \textit{$P$-free} if it does not contain any copy of $P$. Katona and Tarj\'an initiated the study of the parameter $La(n,P)$, the maximum size of a $P$-free family $\cF\subseteq 2^{[n]}$. Note that with this notation the above-mentioned result of Erd\H os can be formulated as $$La(n,P_{k+1})=\sum_{i=1}^k\binom{n}{\lfloor \frac{n-k}{2}\rfloor +i},$$ where $P_{k+1}$ denotes the chain of size $k+1$. As a copy of a chain of length $|P|$ in a family $\cF$ is always a copy of $P$, the result of Erd\H os implies $$La(n,P)\le (|P|-1)\binom{n}{\lfloor n/2\rfloor}.$$ Therefore it is natural to ask for the existence and value of $$\lim_{n \rightarrow \infty}\frac{La(n,P)}{\binom{n}{\lfloor n/2\rfloor}},$$ denoted by $\pi(P)$. It is not known whether $\pi(P)$ exists for every poset, but the precise or asymptotic value of $La(n,P)$ has been determined for many posets and in all known cases the (asymptotically) optimal construction consists of some of the middle levels of $[n]$. This motivated the following  conjecture that first appeared in \cite{GL}.

\begin{conjecture}
\label{conj:bigpos} For any poset $P$ let $e(P)$ denote the largest integer $k$ such that for any $j$ and $n$ the family $\cup_{i=1}^k\binom{[n]}{j+i}$ is $P$-free. Then $\pi(P)$ exists and is equal to $e(P)$. 
\end{conjecture}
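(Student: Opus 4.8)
This is a well-known open conjecture, so I sketch the standard strategy and indicate where it resists a general proof rather than claim a complete argument. The statement separates into the lower bound $\pi(P)\ge e(P)$ and the upper bound $\pi(P)\le e(P)$, the latter subsuming the very existence of $\pi(P)$.

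The lower bound is immediate from the definition. First note that $e(P)$ is finite: any full chain of length $|P|$ contains a copy of $P$ (map a linear extension of $P$ onto the chain in increasing order), so any $|P|$ consecutive levels contain such a chain, whence $e(P)\le |P|-1$. Now, by the definition of $e(P)$, the union of the $e(P)$ middle levels of $[n]$ is $P$-free and has size $(e(P)-o(1))\binom{n}{\lfloor n/2\rfloor}$, so $La(n,P)\ge (e(P)-o(1))\binom{n}{\lfloor n/2\rfloor}$ and therefore $\liminf_{n\to\infty}La(n,P)/\binom{n}{\lfloor n/2\rfloor}\ge e(P)$.

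The crux is the matching upper bound. The plan is to use the chain-counting (Lubell) method: given a $P$-free family $\cF$, double-count incidences between the members of $\cF$ and the $n!$ maximal chains of $2^{[n]}$. Writing $h(\cF)=\frac{1}{n!}\sum_{\cC}|\cF\cap\cC|=\sum_{F\in\cF}1/\binom{n}{|F|}$ for the expected number of members on a random maximal chain $\cC$, the estimate $\binom{n}{|F|}\le\binom{n}{\lfloor n/2\rfloor}$ yields $|\cF|\le h(\cF)\binom{n}{\lfloor n/2\rfloor}$. It would thus suffice to prove $h(\cF)\le e(P)+o(1)$ for every $P$-free family $\cF$. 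In the cases where the conjecture is known, this is achieved by partitioning the maximal chains into a bounded number of bundles and showing that $P$-freeness forces, on average over each bundle, at most $e(P)$ members per chain.

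The main obstacle is that this last step has no general form. A $P$-free family can meet a single maximal chain in far more than $e(P)$ sets, so a naive per-chain bound is hopeless, and no uniform averaging or bundling scheme is known that converts abstract $P$-freeness into the sharp estimate $h(\cF)\le e(P)+o(1)$. The difficulty is precisely the delicate interplay between the internal order of $P$ and the level structure of the cube; every successful instance exploits special features of the particular poset rather than a general mechanism, which is exactly why the conjecture remains open.
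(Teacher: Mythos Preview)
The paper does not prove this statement; it is explicitly presented as an open conjecture (first appearing in \cite{GL}) and is used only as motivation for the paper's own trace-variant conjecture. Your write-up is therefore appropriate: you correctly flag it as open, give the standard (and valid) argument for the lower bound $\liminf La(n,P)/\binom{n}{\lfloor n/2\rfloor}\ge e(P)$ via the $e(P)$ middle levels, and accurately describe the Lubell/chain-counting framework together with the genuine obstruction to a general upper bound. There is nothing to compare against in the paper beyond this, and your assessment of the state of the problem is sound.
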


\cjref{bigpos} was proved for many classes of posets. Let us state one of the nicest results of the area. To do so we need the following definition.
For a poset $P$ the Hasse diagram, denoted by $H(P)$, is a graph whose vertices are elements of $P$, and $xy$ is an edge if $x<y$ and there is no other element $z$ of $P$ with $x<z<y$. A poset $T$ is a \textit{tree poset} if its Hasse diagram is a tree. Let $h(T)$ denote the length of a longest chain in $T$. Bukh proved the following.

\begin{theorem}[\cite{B}]\label{thm:bukh}
For any tree poset $T$, we have $$La(n,T)=(h(T)-1+o(1))\binom{n}{\lfloor n/2\rfloor}.$$
\end{theorem}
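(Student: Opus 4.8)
The plan splits into an easy lower bound and a much harder upper bound.

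For the lower bound I would take $\cF$ to be the union of the $h(T)-1$ middle levels, $\cF=\bigcup_{i=1}^{h(T)-1}\binom{[n]}{\lfloor (n-h(T))/2\rfloor+i}$. Any copy of $T$ contains the image of a longest chain of $T$, i.e.\ $h(T)$ nested sets of $h(T)$ distinct sizes; since $\cF$ uses only $h(T)-1$ sizes it is $T$-free. As $|\cF|=(h(T)-1+o(1))\binom{n}{\lfloor n/2\rfloor}$, this matches the claimed value and shows $e(T)\ge h(T)-1$ in the notation of \cjref{bigpos}.

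For the upper bound I would first use the reduction from the introduction to assume every member of $\cF$ has size in $[n/3,2n/3]$, losing only a negligible number of sets. The skeleton is an induction on $|T|$ obtained by deleting a leaf. Choose $v$ to be the top of a longest chain of $T$: then $v$ is a maximal element and a leaf of $H(T)$ with unique neighbour $u<v$, and $T'=T\setminus\{v\}$ is again a tree poset. Fix a threshold $N\ge |T|$ and set $\cF_{\ge N}=\{F\in\cF:|\{G\in\cF:G\supsetneq F\}|\ge N\}$. The clean step is that $\cF_{\ge N}$ is $T'$-free: a copy of $T'$ in $\cF_{\ge N}$ uses only $|T|-1<N$ sets, so the image of $u$ has an unused superset $G\in\cF$, and extending by $v\mapsto G$ yields a copy of $T$ in $\cF$ (all relations $x<_T v$ force $i(x)\subseteq i(u)\subsetneq G$), a contradiction. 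Hence $|\cF_{\ge N}|\le La(n,T')$ by induction. For the complement $\cF_{<N}=\cF\setminus\cF_{\ge N}$ I would count chains: on any full chain the members of $\cF_{<N}$ form a subchain $F_1\subsetneq\cdots\subsetneq F_k$, and since $F_1$ has the $k-1$ supersets $F_2,\dots,F_k$ inside $\cF$ we get $k-1<N$; thus every full chain meets $\cF_{<N}$ in fewer than $N$ sets, so $\sum_{F\in\cF_{<N}}\binom{n}{|F|}^{-1}<N$ and $|\cF_{<N}|\le N\binom{n}{\lfloor n/2\rfloor}$.

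The main obstacle is that, with a constant threshold, this only gives $La(n,T)\le C_T\binom{n}{\lfloor n/2\rfloor}$ for some constant $C_T$, not the tight constant $h(T)-1$. Two difficulties must be overcome. First, one cannot always drop the height by deleting a single top leaf: if $T$ has two longest chains sharing only their bottom element, then $h(T')=h(T)$ for every choice of $v$, so the induction on $|T|$ fails to reduce $h$ and the bookkeeping must track the height directly rather than the vertex count. Second, and more seriously, a $T$-free family may contain chains far longer than $h(T)$ whenever $T$ has a branching vertex (a single full chain, for instance, contains no branching configuration and hence no copy of such a $T$), so neither the crude chain count above nor the decomposition of $\cF$ into the antichains $\{F:g(F)=j\}$ (with $g(F)$ the length of a longest chain of $\cF$ topped by $F$) is tight, since each over-weights the long but necessarily sparse chains. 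The resolution, which I expect to be the technical heart, is to let the threshold $N=N(n)$ grow slowly with $n$, balance it against the chain count, and show that the sets lying on long chains contribute only a lower-order term; this is precisely where the $o(1)$ error (a quantitative factor in Bukh's argument) is generated, while the high/low-degree split supplies the clean inductive scaffold.
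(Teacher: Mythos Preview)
The paper does not prove \tref{bukh}; it is quoted from Bukh's paper \cite{B} and used as a black box (for instance in the proof of \tref{tree}). So there is no ``paper's own proof'' to compare against. I will instead comment on your sketch on its own merits and vis-\`a-vis Bukh's actual argument.

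Your lower bound is correct and standard. Your upper-bound scaffold (delete a leaf $v$ above its unique neighbour $u$, split $\cF$ into high-degree and low-degree parts) is a natural idea, and your verification that $\cF_{\ge N}$ is $T'$-free is fine. You are also honest that this only yields $La(n,T)\le C_T\binom{n}{\lfloor n/2\rfloor}$ and that obtaining the constant $h(T)-1$ is the real work. However, the specific resolution you propose cannot work as written. Letting the threshold $N=N(n)\to\infty$ makes the bound $|\cF_{<N}|\le N\binom{n}{\lfloor n/2\rfloor}$ \emph{worse}, not better; and you genuinely cannot hope that $|\cF_{<N}|=o\bigl(\binom{n}{\lfloor n/2\rfloor}\bigr)$, since for $\cF=\binom{[n]}{\lfloor n/2\rfloor}$ every set has zero supersets in $\cF$, so $\cF_{<N}=\cF$. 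Thus the high/low split on the \emph{total} number of supersets is too crude to isolate an $o(1)$ remainder, and the induction on $|T|$ (rather than on $h(T)$) would, even if it closed, accumulate a constant per deleted leaf rather than per unit of height --- think of $T=\bigvee_s$ with $h(T)=2$ but $|T|=s+1$.

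Bukh's actual proof is a direct chain-counting (Lubell) argument rather than an induction on $|T|$. For a uniformly random maximal chain $\cC$, he shows $\mathbb{E}|\cF\cap\cC|\le h(T)-1+o(1)$. The key step is an embedding lemma: fixing a large constant $c=c(T)$, if $F_1\subsetneq\cdots\subsetneq F_{h(T)}$ are $h(T)$ sets of $\cF$ on $\cC$ and each $F_i$ has at least $c$ ``local'' neighbours in $\cF$ (sets of $\cF$ at small symmetric difference from $F_i$ along the chain direction), then one can greedily embed $T$ using these local neighbours for the branching. One then shows that the $F\in\cF$ lacking enough such local neighbours contribute only $O_T(1/n)$ to the Lubell function, so almost all of $\mathbb{E}|\cF\cap\cC|$ comes from sets that \emph{do} branch, forcing at most $h(T)-1$ of them on a typical chain. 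The essential difference from your sketch is that ``degree'' is measured locally (immediate or near-immediate containments) rather than globally, which is exactly what allows the low-degree part to be $o(1)$ in Lubell weight rather than $O(N)$.
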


\vskip 0.4truecm

\textbf{Traces of set families}. The trace of a set family is its restriction to a subset of its underlying set. Formally, the \textit{trace of a set $F$ on another set $X$} is $$F|_X:=F\cap X,$$ and the trace of a family $\cF$ on $X$ is $$\cF|_X:=\{F|_X:F\in \cF\}.$$ As different sets can have the same trace, we obtain $|\cF|_X|\le |\cF|$.
The fundamental result about traces of set families is the so-called Sauer-lemma proved independently by Sauer \cite{Sa}, Shelah \cite{Sh}, and Vapnik and Chervonenkis \cite{VC}. 

\begin{theorem}\label{thm:sauer}
If the size of a family $\cF\subseteq 2^{[n]}$ is larger than $\sum_{i=0}^{k-1}\binom{n}{i}$, then there exists a $k$-subset $X$ of $[n]$ such that $\cF|_X=2^X$ holds.
\end{theorem}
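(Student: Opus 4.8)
The plan is to prove the Sauer--Shelah lemma (\tref{sauer}) by the \emph{shifting} (or \emph{down-compression}) technique, which is the cleanest route to the sharp bound. We want to show that if $\cF \subseteq 2^{[n]}$ has $|\cF| > \sum_{i=0}^{k-1}\binom{n}{i}$, then some $k$-set is \emph{shattered}, meaning $\cF|_X = 2^X$. Rather than attack the shattering statement head-on, I would prove the stronger and more symmetric claim that \emph{every} family shatters at least $|\cF|$ distinct subsets of $[n]$ (counting the empty set); since the number of subsets of size $<k$ is exactly $\sum_{i=0}^{k-1}\binom{n}{i}$, a family larger than this must shatter some set of size $\ge k$, and any shattered set contains a shattered subset of every smaller size, giving a shattered $k$-set.

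First I would define, for each element $j \in [n]$, the down-shift $D_j$ that replaces each $F \in \cF$ by $F \setminus \{j\}$ whenever $F \setminus \{j\} \notin \cF$, and leaves $F$ unchanged otherwise. The key structural observation is that $D_j$ is injective on $\cF$, so $|D_j(\cF)| = |\cF|$; thus applying shifts repeatedly in every coordinate terminates (the sum of set sizes strictly decreases until no shift changes anything) at a \emph{down-closed} family $\cF^*$ with $|\cF^*| = |\cF|$. The second, and crucial, step is the monotonicity lemma: if a set $X$ is shattered by $D_j(\cF)$, then $X$ is already shattered by $\cF$. I expect this to be the main obstacle, since it requires a careful case analysis depending on whether $j \in X$, tracking how the four possible traces on $X$ behave under the shift; the heart of the matter is that a shattering witnessed after shifting can always be pulled back to a shattering before shifting.

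Granting the monotonicity lemma, the argument closes quickly. The number of sets shattered by $\cF$ is at least the number shattered by $\cF^*$, so it suffices to handle the down-closed case. But for a down-closed family $\cF^*$, every member $F \in \cF^*$ is itself shattered: since $\cF^*$ contains all subsets of each of its members, the trace $\cF^*|_F$ is all of $2^F$. Hence $\cF^*$ shatters at least $|\cF^*| = |\cF|$ sets, completing the chain of inequalities. Converting the count into the size bound is then immediate: if $|\cF|$ exceeds $\sum_{i=0}^{k-1}\binom{n}{i} = \left|\binom{[n]}{\le k-1}\right|$, the family shatters more sets than there are sets of size at most $k-1$, forcing a shattered set $Y$ with $|Y| \ge k$; restricting the witnessed shattering to any $k$-subset $X \subseteq Y$ yields $\cF|_X = 2^X$.

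I would present the shifting lemma and the monotonicity lemma as the two technical ingredients, prove them as short self-contained claims, and then assemble the theorem in a few lines. An alternative I would mention but not pursue is the direct inductive (polynomial-method or Pajor-type) argument counting shattered sets by induction on $n$, splitting $\cF$ according to membership of the last element; this avoids shifting entirely and also proves the strong ``$\cF$ shatters at least $|\cF|$ sets'' statement, but the shifting proof makes the monotonicity phenomenon most transparent.
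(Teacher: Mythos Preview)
Your proposal is correct and follows essentially the same down-compression route that the paper sketches at the start of Section~2 (attributed to Alon and Frankl): reduce to downward-closed families via the operators $D_i$, using that the no-shattering property is preserved under shifting, and then note that the down-closed case is immediate. The paper does not give a full proof of \tref{sauer}---it is stated as a known result with references---so your added detail, in particular the Pajor-type strengthening that $\cF$ shatters at least $|\cF|$ sets, goes slightly beyond the paper's outline but is entirely compatible with it.
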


The bound in \tref{sauer} is tight as shown by $\binom{[n]}{\le k-1}$ and $\binom{[n]}{\ge n-k+1}$, but there are many other extremal families and a complete characterization is not yet known. This theorem leads naturally in several directions. One of them is the area of \textit{forbidden subconfigurations}. If $\cH\subseteq 2^{[k]}$ is a fixed family, then one can ask for the maximum size of a 'big' family $\cF\subseteq 2^{[n]}$ such that for any $k$-subset $X$ of $[n]$, the trace $\cF|_X$ does not contain a subfamily isomorphic to $\cH$. For more details, the interested reader is referred to the survey of Anstee \cite{A} and the references within. Naturally, one can consider several forbidden configurations at once. To mix the areas of forbidden subposet problems and forbidden subconfigurations, we will forbid all configurations that can be described by a poset structure.

\vspace{2mm}

We say that $\cF\subseteq 2^{[n]}$ is \textit{$l$-trace $P$-free} if for any $l$-subset $L\subseteq [n]$, the family $\cF|_L$ is $P$-free. A family $\cF$ is \textit{trace $P$-free} if it is $l$-trace $P$-free for all $l\le n$. Let $Tr(n,P)$ be the maximum size of a trace $P$-free family $\cF\subseteq 2^{[n]}$ and $Tr_l(n,P)$ be the maximum size of an $l$-trace $P$-free family $\cF\subseteq 2^{[n]}$.

\vspace{2mm}

If the traces of two sets on some set $X$ are in inclusion, then they remain in inclusion for any subset $Y$ of $X$, however the traces might coincide on $Y$. So it is not straightforward from definition that forbidding a subposet in the trace on a smaller subset is a stronger property than doing the same on a larger subset. However, we will prove the following rather easy monotonicity result.

\begin{proposition}\label{prop:monotone}
For a poset $P$ let $E(P)$ denote the number of edges in the Hasse diagram $H(P)$. If $E(P)\le k \le l$, then we have $$Tr_k(n,P)\le Tr_l(n,P).$$
\end{proposition}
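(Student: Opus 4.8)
The plan is to prove the equivalent monotonicity statement that every $k$-trace $P$-free family is also $l$-trace $P$-free; since this exhibits any extremal $k$-trace $P$-free family as an $l$-trace $P$-free family, the inequality $Tr_k(n,P)\le Tr_l(n,P)$ follows immediately. Passing to the contrapositive, it suffices to show that if $\cF|_L$ contains a copy of $P$ for some $l$-subset $L$, then $\cF|_K$ already contains a copy of $P$ for some $k$-subset $K$. So I fix such a copy, realised by sets $A_p:=F_p\cap L$ (with $F_p\in\cF$, $p\in P$) that are pairwise distinct and satisfy $A_p\subsetneq A_{p'}$ whenever $p<_P p'$, and I look for $K\subseteq L$ (so that $F_p\cap K=A_p\cap K$) of size exactly $k$ on which the traces $A_p\cap K$ still form a copy of $P$.

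The decisive observation — and the one point of the argument I would be careful to state correctly — is that preserving a copy reduces to preserving \emph{distinctness} alone. Indeed, for any $K\subseteq L$ and any relation $p<_P p'$ we have $A_p\cap K\subseteq A_{p'}\cap K$ automatically, simply because $A_p\subseteq A_{p'}$ on $L$. Hence the moment the $|P|$ traces $\{A_p\cap K\}_{p\in P}$ are pairwise distinct, every such inclusion is forced to be \emph{strict}, and $p\mapsto A_p\cap K$ is a bona fide copy of $P$. Thus the order relations — and in particular the edges of $H(P)$ — never need to be tracked individually: the strictness comes for free, and the only resource to be spent is the number of coordinates needed to keep all $|P|$ traces apart.

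That number I would control greedily. Starting from $K_0=\varnothing$, where all traces collapse to $\varnothing$, as long as two traces coincide, say $A_p\cap K_0=A_q\cap K_0$ while $A_p\ne A_q$ on $L$, I adjoin any coordinate $x\in(A_p\triangle A_q)\subseteq L$ to $K_0$. Since enlarging $K_0$ only refines the partition of $P$ by traces, this strictly increases the number of distinct traces, so after at most $|P|-1$ steps all of them are distinct. The resulting set satisfies $|K_0|\le |P|-1\le E(P)\le k$, the middle inequality being the sole place the hypothesis is used: a connected Hasse diagram $H(P)$ has at least $|P|-1$ edges, so $E(P)\ge|P|-1$. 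Finally I pad $K_0$ to a set $K\subseteq L$ of size exactly $k$, which is possible because $|L|=l\ge k$; padding preserves distinctness (traces differing on $K_0$ still differ on $K$) and preserves the inclusions $A_p\cap K\subseteq A_{p'}\cap K$, so $K$ still carries a copy of $P$. This produces the required copy in $\cF|_K$, completing the contrapositive. Everything after the distinctness observation — the greedy separation and the padding — is routine, so the only genuine content is recognising that $|P|-1\le E(P)$ coordinates always suffice.
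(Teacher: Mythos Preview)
Your proof is correct and follows a somewhat different route from the paper's. The paper argues directly on the Hasse diagram: for each cover relation $p\prec p'$ it chooses a single element $x_e\in (F_{p'}\cap L)\setminus(F_p\cap L)$, collects these into a set of size at most $E(P)\le k$, and declares that restricting to any $k$-set containing all the $x_e$'s preserves the copy. You instead observe that on any $K\subseteq L$ the inclusions $A_p\cap K\subseteq A_{p'}\cap K$ are automatic, so the entire task reduces to keeping the $|P|$ traces pairwise distinct; a greedy separation then costs at most $|P|-1$ coordinates, and you close with $|P|-1\le E(P)$.

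Your reduction to distinctness is in fact the more careful of the two: the paper's edge-by-edge choice guarantees $A_p\cap Y\subsetneq A_{p'}\cap Y$ along comparable pairs but says nothing about incomparable $p,p'$, so it does not on its face rule out two incomparable traces coinciding (which would destroy the bijection). Your argument handles this cleanly. The one point to flag is that the inequality $|P|-1\le E(P)$ you invoke requires $H(P)$ to be connected; you state this as though it were a general fact, but it is an assumption (and a necessary one: for an antichain $P$ the proposition as stated is false). The paper's proof tacitly relies on the same hypothesis, so this is a defect of the proposition's statement rather than of your argument.
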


\pref{monotone} implies that for any integer $k$ we have $Tr(n,P_{k+1})=Tr_k(n,P_{k+1})$. The value $Tr_k(n,P_{k+1})=\sum_{i=0}^{k-1}\binom{n}{i}$ follows from \tref{sauer}. The second author proved in \cite{P1} that the only $k$-trace $P_{k+1}$-free families are $\binom{[n]}{\le k-1}$ and $\binom{[n]}{\ge n-k+1}$ (moreover, he showed that for any fixed $k\le l$ we have $Tr_l(n,P_{k+1})=\sum_{i=0}^{k-1}\binom{n}{i}$ if $n$ is large enough and the only extremal families are $\binom{[n]}{\le k-1}$ and $\binom{[n]}{\ge n-k+1}$). 


Note that for any poset $P$ if $y(P)$ denotes the largest integer $m$ with $2^{[m]}$ not containing a copy of $P$, then by \tref{sauer} we have $Tr(n,P)\le \sum_{i=0}^{y(P)}\binom{n}{i}$, i.e. $Tr(n,P)$ grows polynomially in $n$.

The simplest non-chain posets are $\bigvee$ and $\bigwedge$, both being a poset on 3 elements $a,b,c$ with $a<_{\bigvee} b,c$ and $a,b<_{\bigwedge} c$. As they are both subposets of $P_3$, we have $Tr(n,\bigvee), Tr(n,\bigwedge)\le n+1$ and taking complements yields $Tr(n,\bigvee)=Tr(n,\bigwedge)$. Moreover, we know that there exist trace $\bigwedge$-free and trace $\bigvee$-free families of size $n+1$, namely $\binom{[n]}{\le 1}$ and $\binom{[n]}{\ge n-1}$.  The first contains $\bigvee$ and does not contain $\bigwedge$, while it is the opposite for the second family. On the other hand if we forbid both $\bigvee$ and $\bigwedge$ as traces, then the family cannot have more than 2 sets.

As a first non-trivial and non-chain instance of the problem of finding $Tr(n,P)$ we will consider the butterfly poset $B$ on 4 elements $a,b,c,d$ with $a,b<_B c,d$.

\begin{theorem}\label{thm:B}
For $n\ge 4$ we have $$Tr(n,B)=\lfloor 3n/2\rfloor+1.$$
\end{theorem}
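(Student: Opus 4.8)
The plan is to treat the two inequalities separately. For the lower bound I would take a matching $M\subseteq\binom{[n]}{2}$ saturating all but at most one vertex, so that $|M|=\lfloor n/2\rfloor$, and set $\cF=\{\emptyset\}\cup\binom{[n]}{1}\cup M$, which has exactly $1+n+\lfloor n/2\rfloor=\lfloor 3n/2\rfloor+1$ members. To see that $\cF$ is trace $B$-free, note that every member of every trace $\cF|_L$ has size at most $2$. In a copy of $B$ the images $Y,Z$ of $c,d$ each strictly contain the two images $W,X$ of $a,b$, and a set of size at most $2$ can strictly contain two distinct sets only if it has size exactly $2$; hence $Y,Z$ are distinct $2$-element traces. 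If $\{W,X\}=\{\emptyset,\{a\}\}$ then $Y,Z$ are the traces of two distinct matching edges through $a$, which is impossible since $M$ is a matching; if $\{W,X\}=\{\{a\},\{b\}\}$ with $a\neq b$ then $Y=Z=\{a,b\}$, again impossible. Thus no trace contains a copy of $B$.

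For the upper bound the main tool I would use is a one-coordinate projection. For $i\in[n]$ set $\pi_i(\cF)=\{F\in\cF:i\notin F\}\cup\{F\setminus\{i\}:F\in\cF,\ i\in F\}\subseteq 2^{[n]\setminus\{i\}}$. For every $L\subseteq[n]\setminus\{i\}$ one checks directly that $\pi_i(\cF)|_L=\cF|_L$, so $\pi_i(\cF)$ is again trace $B$-free, while $|\cF|-|\pi_i(\cF)|=m_i$, where $m_i:=|\{F\subseteq[n]\setminus\{i\}:F\in\cF \text{ and } F\cup\{i\}\in\cF\}|$ counts the pairs of members of $\cF$ differing exactly in the coordinate $i$. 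This already gives the recursion $Tr(n,B)\le Tr(n-1,B)+\min_i m_i$, reducing everything to controlling $m_i$.

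The combinatorial heart is to prove $m_i\le 2$ for every $i$. I would do this by showing that three sets $F_1,F_2,F_3$ with all six sets $F_j,\ F_j\cup\{i\}$ lying in $\cF$ already force a copy of $B$ in a suitable $3$-element trace. Organising the proof by the mutual containments among $F_1,F_2,F_3$, in each case I can pick two elements $r,s$ that separate them and take $L=\{i,r,s\}$ (or a closely related triple): two of the sets $F_j\cup\{i\}$ trace to incomparable $2$-sets sharing $i$, while two of the remaining sets trace to two common proper subsets of both, producing the butterfly. (I have verified representative nested and antichain configurations by hand; the general statement is a short finite case analysis.) Combined with the recursion, this yields $Tr(n,B)\le Tr(n-1,B)+2$, which already pins down the even case.

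The remaining, and in my view hardest, point is to upgrade $+2$ to the exact floor, since $Tr(n,B)-Tr(n-1,B)$ should alternate between $2$ and $1$ and the estimate $Tr(n-1,B)+2$ is off by one for odd $n$. Here I would instead aim for the two-step bound $Tr(n,B)\le Tr(n-2,B)+3$ (note $\lfloor 3n/2\rfloor+1=\lfloor 3(n-2)/2\rfloor+1+3$), projecting two coordinates $i$ then $j$ with total loss $m_i+m'_j\le 3$. The obstruction is exactly the boundary case in which $m_i=2$ for every $i$ and every single projection remains extremal; I expect to exclude it by analysing the rigid structure forced by $m_i=2$ — the ``doubled'' pairs must behave like the pairs $(\emptyset,\{i\})$ and $(\{i'\},\{i,i'\})$ of the extremal family — and showing that no family can have all coordinates simultaneously doubled twice while exceeding $\lfloor 3n/2\rfloor+1$. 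Making this near-extremal analysis precise (equivalently, characterising the extremal families) is the main obstacle; the base cases $n=4,5$ are checked directly.
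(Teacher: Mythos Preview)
Your lower-bound construction and verification are correct and match the paper's; the paper verifies it more succinctly by observing that a downward-closed $B$-free family is automatically trace $B$-free, since every trace of a downward-closed family is a subfamily.

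For the upper bound you pursue a genuinely different strategy from the paper. The paper works through Frankl's arrow relation: it first establishes the base case $Tr(5,B)=8$ by a detailed case analysis, then proves $(n,\lfloor 3n/2\rfloor+2)\rightarrow(5,9)$ for $n\ge 6$ using Frankl's reduction to downward-closed families; these together force any family of size $\lfloor 3n/2\rfloor+2$ to have a $5$-element trace of size at least $9>Tr(5,B)$, hence not trace $B$-free. Your projection recursion $Tr(n,B)\le Tr(n-1,B)+m_i$ is a clean alternative idea, and your claim $m_i\le 2$ is true and can indeed be proved by a short case split (choosing a minimal $F_a$ among the three sets and an appropriate $L=\{i,r,s\}$ always produces a copy of $B$ in $\cF|_L$).

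However, there is a real gap, which you yourself flag. The one-step bound $m_i\le 2$ only gives $Tr(n,B)\le Tr(n-1,B)+2$, hence $Tr(n,B)\le 2n-1$ from the base case $n=4$; this misses the target $\lfloor 3n/2\rfloor+1$ by roughly $n/2$. Your proposed fix, the two-step bound $Tr(n,B)\le Tr(n-2,B)+3$, requires ruling out families in which every coordinate has $m_i=2$ \emph{and} every single projection again has all $m'_j=2$. You do not prove this, and the ``near-extremal analysis / characterisation of extremal families'' you defer to is exactly the substantive missing step --- it is a stability statement that does not follow from anything you have set up. Without it your argument yields only $Tr(n,B)\le 2n-1$. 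The paper avoids this difficulty entirely: by passing through the arrow relation it reduces all of the delicate structure to the single finite computation $Tr(5,B)=8$, so no inductive near-extremal analysis is needed.
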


As remarked above, \tref{sauer} implies that $Tr(n,P)$ grows polynomially in $n$ and the same argument shows that for any fixed $l> y(P)$ we have $Tr_l(n,P)=O(n^{l-1})$. The situation completely changes when $l$ is close to $n$. By definition, we have $Tr_n(n,P)=La(n,P)$. Observe that if $n$ is large enough and $\cF$ consists of consecutive levels of $[n]$, say $\cF=\cup_{i=j}^{j'}\binom{[n]}{i}$, then for any $(n-k)$-subset $X$ of $[n]$ we have $\cF|_X=\cup _{i=j-k}^{j'}\binom{X}{i}$. In particular, if $j'-j+k+1\le e(P)$, then $\cF|_X$ is $P$-free. This shows the inequality $$Tr_{n-k}(n,P)\ge (e(P)-k+o(1))\binom{n}{\lfloor n/2\rfloor}.$$ Therefore we propose the following generalization of \cjref{bigpos}.

 \begin{conjecture}\label{conj:ltrace}
 For any poset $P$ and integer $k<e(P)$ we have $$Tr_{n-k}(n,P)=(e(P)-k+o(1))\binom{n}{\lfloor n/2\rfloor}.$$

\noindent
Moreover, if $k\ge e(P)$, then $Tr_{n-k}(n,P)=o(\binom{n}{\lfloor n/2\rfloor})$ holds.
 \end{conjecture}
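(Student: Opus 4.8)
Since the case $k=0$ of the statement reads $Tr_n(n,P)=La(n,P)=(e(P)+o(1))\binom{n}{\lfloor n/2\rfloor}$, it is exactly \cjref{bigpos}; hence the full statement is at least as hard as the central open problem of the area and a complete proof is not to be expected. My plan is therefore to (i) record the lower bounds, (ii) set up the upper bound as a reduction to $La$-type estimates together with a \emph{fattening} analysis of the trace, and (iii) prove the statement for those posets for which \cjref{bigpos} is known, first of all tree posets via \tref{bukh}.

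The lower bounds are the easy direction and are already contained in the paragraph preceding the statement. For $k<e(P)$ the family $\cF=\cup_{i=j}^{j'}\binom{[n]}{i}$ with $j'-j+1=e(P)-k$ consecutive middle levels has $(e(P)-k+o(1))\binom{n}{\lfloor n/2\rfloor}$ sets, and on every $(n-k)$-subset $X$ its trace is $\cup_{i=j-k}^{j'}\binom{X}{i}$, a block of exactly $e(P)$ consecutive levels, hence $P$-free by the definition of $e(P)$. For the second part it suffices to note that when $k\ge e(P)$ even a single level $\binom{[n]}{m}$ traces on an $(n-k)$-subset $X$ to the band $\binom{X}{[m-k,m]}$ of $k+1\ge e(P)+1$ consecutive levels, which by the definition of $e(P)$ already contains a copy of $P$.

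The content is thus the matching upper bound. The naive route is to average over the removed $k$-set: if $\cF$ is $(n-k)$-trace $P$-free then $\cF|_X$ is $P$-free for $X=[n]\setminus Y$, so $|\cF|_X|\le La(n-k,P)$, and since each trace has at most $2^k$ preimages, $|\cF|\le 2^k La(n-k,P)\approx e(P)\binom{n}{\lfloor n/2\rfloor}$. This is off by exactly the crucial additive $k$, so the plan must instead exploit the fattening phenomenon visible in the construction: taking the trace turns each occupied level into a band of $k+1$ levels, so a family occupying an interval $[a,b]$ of set-sizes has a trace occupying $[a-k,b]$, and keeping this inside $e(P)$ consecutive levels forces the width $b-a+1$ to be at most $e(P)-k$. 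Quantitatively I would combine two mechanisms. First, a \emph{robustness reduction}: being $(n-k)$-trace $P$-free implies that $\cF$ contains no copy $\{F_p\}_{p\in P}$ of $P$ in which every one-sided set difference between images of distinct elements has size larger than $k$, because such a copy survives the removal of any $k$-set and reappears in a trace; this controls the ``spread out'' part of $\cF$ and should be amenable to Bukh's embedding argument from \tref{bukh} for tree posets. Second, a \emph{supersaturation} estimate to handle the ``concentrated'' part: a family occupying more than $e(P)-k$ levels at positive density must, after fattening into bands of width $k+1$, contain $P$ in some trace, because $e(P)+1$ consecutive levels at positive density already contain many copies of $P$.

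The hard part is to merge these two mechanisms so that the saving is exactly $k$ rather than the off-by-$k$ bound of the naive argument; this is precisely what makes the general statement open. Concretely, for tree posets $T$ I would attempt a Lubell-type computation of $\sum_{F\in\cF}\binom{n}{|F|}^{-1}$ in which a random maximal chain is coupled with an adversarial choice of the removed $k$-set, aiming to show that the expected number of sets of $\cF$ on a chain drops from $h(T)-1=e(T)$ to $e(T)-k$; controlling the collisions in this coupling, equivalently showing that the fattening loss is unavoidable and not merely possible, is the main obstacle. The second assertion $(k\ge e(P))$ should follow from the supersaturation step alone, once one shows that a family of size $\Omega(\binom{n}{\lfloor n/2\rfloor})$ necessarily fattens, on some $(n-k)$-subset, into $e(P)+1$ positively dense consecutive levels.
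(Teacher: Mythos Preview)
The statement is a \emph{conjecture} in the paper, not a theorem, so there is no proof in the paper to compare against. You correctly recognise this: the case $k=0$ is precisely \cjref{bigpos}, so a full proof is out of reach. Your lower bound for $k<e(P)$ reproduces exactly the paper's argument from the paragraph preceding the conjecture, and that is all the paper establishes in general.

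There is, however, a genuine slip in your treatment of the ``moreover'' part. You write that ``it suffices to note'' that a single middle level traces to a band of $k+1\ge e(P)+1$ consecutive levels and hence contains $P$. This observation only shows that the obvious construction fails; it does not prove $Tr_{n-k}(n,P)=o\bigl(\binom{n}{\lfloor n/2\rfloor}\bigr)$, which is an upper bound on \emph{all} $(n-k)$-trace $P$-free families. The paper does not claim this part is easy either --- it explicitly leaves it open in general and verifies it only for $K_{r,s}$ (via \tref{2level}(iii)) and for the diamond (via \tref{diamond}). Also, a minor point: the definition of $e(P)$ guarantees that $e(P)+1$ consecutive levels contain $P$ for \emph{some} choice of $j$ and $n$, not for all, so even your heuristic needs a small additional argument.

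Your upper-bound programme (robustness reduction plus supersaturation, Lubell counting coupled with an adversarial removed $k$-set) is a reasonable outline, but it is quite different from what the paper actually does in the special cases it settles. For $K_{r,s}$ the paper proceeds by explicit reductions (\lref{relation} and the claim inside the proof of \tref{2level}(iii)) that bootstrap from the $\bigvee_s$ case; for the diamond it encodes the family by edge-labelled graphs along a chain decomposition and invokes a K\H{o}v\'ari--S\'os--Tur\'an bound (\lref{317}). None of these resemble your fattening/supersaturation scheme, and the paper gives no indication that a uniform argument along your lines is known to work even for tree posets.
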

 
 Note that to see the moreover part of \cjref{ltrace}, by \pref{monotone}, it is enough to prove its statement for $k=e(P)$.
 
 \cjref{ltrace} was verified for chains by the second author in \cite{P2} and he obtained the exact value of $Tr_{n-1}(n,P_{k+1})$ for any positive integer $k$ in \cite{P3}. We prove \cjref{ltrace} for the posets $K_{r,s}$ on $r+s$ elements $a_1,a_2,\dots,a_r,b_1,b_2,\dots, b_s$ with $a_i< b_j$ for any $1\le i\le r$ and $1\le j \le s$. We will use the notation $\bigwedge_r$ for $K_{r,1}$ and $\bigvee_s$ for $K_{1,s}$. Note that $e(K_{r,s})=2$ if $r$ and $s$ are both at least two and $e(\bigvee_s)=e(\bigwedge_r)=1$. \cjref{bigpos} was verified for $K_{r,s}$ by De Bonis and Katona \cite{DK}. Therefore the following theorem implies \cjref{ltrace} in the case of the posets $K_{r,s}$.
 
 \begin{theorem}\label{thm:2level}
For any  positive integer $s\ge 1$, we have 

\vspace{3mm}

\textbf{(i)} $$\frac{s}{n}\binom{n}{\lfloor n/2 \rfloor}\le Tr_{n-1}(n,\bigvee\nolimits_s)\le \left(\frac{3s^3}{n}+o\left(\frac{1}{n}\right)\right)\binom{n}{\lfloor n/2 \rfloor}.$$

\vspace{2mm}

Furthermore, if $r,s\ge 2$, then we have

\vspace{3mm}
\textbf{(ii)} $Tr_{n-1}(n,K_{r,s})=(1+o(1))\binom{n}{\lfloor n/2\rfloor}$, and

\vspace{2mm}

\textbf{(iii)} $Tr_{n-2}(n,K_{r,s})\le \frac{6((s+1)^2+(r+1)^2)}{n}\binom{n}{\lfloor n/2\rfloor}$.
\end{theorem}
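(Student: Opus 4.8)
My plan is to treat the three parts through one mechanism — turning the trace hypothesis into degree and spread conditions on $\cF$ itself — together with a random maximal chain (LYM type) argument that produces the factor $\tfrac1n$. For the lower bound in (i) I would keep, inside the middle level $\binom{[n]}{\lfloor n/2\rfloor}$, the $s$ largest residue classes of the weight $F\mapsto\sum_{i\in F}i\pmod n$. If $G\cup\{y\}$ and $G\cup\{y'\}$ lie in the same class then $y\equiv y'$, so each $(\lfloor n/2\rfloor-1)$-set is covered at most $s$ times; this is exactly the condition making every one-element-deletion trace $\bigvee_s$-free, and averaging over the $n$ classes yields at least $\tfrac sn\binom{n}{\lfloor n/2\rfloor}$ sets.

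The engine for all the upper bounds is the observation that to exhibit a copy of a poset in $\cF|_{[n]\setminus\{x\}}$ it suffices to delete the element $x$ that pushes one set of $\cF$ down onto the bottom of the copy. Thus, if a set $G$ had $s+1$ supersets $G\cup\{y_1\},\dots,G\cup\{y_{s+1}\}$ in $\cF$, deleting $x=y_{s+1}$ would place $G$ below $s$ distinct traces, a $\bigvee_s$; hence every set has at most $s$ immediate supersets in $\cF$, giving $|\cF\cap\binom{[n]}{m}|\le\frac{s}{n-m+1}\binom{n}{m}$. Choosing $x$ outside the union of a hypothetical antichain of supersets upgrades this to: every $A\in\cF$ has at most $s-1$ \emph{proper} supersets in $\cF$, so $\cF$ has height $\le s$ and splits, by Mirsky's theorem, into at most $s$ antichains; and, running the same argument with the deleted element held fixed, whenever $G\cup\{x\}\in\cF$ the set $G$ has at most $s-1$ proper supersets in $\cF$ avoiding $x$. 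This last, genuinely cross-level, statement is what forbids two well-separated levels from both being dense.

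After discarding sets of size outside $[n/3,2n/3]$ I would extract the $\tfrac1n$ as follows. For a uniformly random maximal chain $C_0\subsetneq\dots\subsetneq C_n$ count the sets $A\in\cF$ with $C_{|A|-1}\subsetneq A$; since $A$ is counted with probability $(n-|A|+1)/\binom{n}{|A|}$, the expected count equals $\sum_{A\in\cF}(n-|A|+1)/\binom{n}{|A|}$, and for $|A|\le n/2$ the summand is at least $(n/2)/\binom{n}{\lfloor n/2\rfloor}$. So a uniform $O(s^3)$ bound on this count, valid for every chain, would make the lower half of $\cF$ have size $O(s^3/n)\binom{n}{\lfloor n/2\rfloor}$; the dual downward count, bounded by the same reasoning inside each of the $\le s$ antichains, controls the upper half. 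For $s=1$ the mechanism is transparent: any two distinct sets $C_{k-1}\cup\{a\}$ and $C_{k'-1}\cup\{a'\}$ of $\cF$ lying on a fixed chain are forced to satisfy $A\subseteq A'$ or $|A\setminus A'|=1$, both forbidden, so each chain meets at most one such set. \textbf{The step I expect to be the real obstacle} is the $O(s^3)$ bound on the per-chain count in general: this is precisely where one must combine the height bound with the cross-level ``supersets avoiding $x$'' condition to stop the special sets from accumulating along the chain, and where the purely per-level degree estimate fails — summed over the $\sim\sqrt n$ relevant central levels it gives only $O(1/\sqrt n)\binom{n}{\lfloor n/2\rfloor}$, a factor $\sqrt n$ short of the target.

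For (ii) the lower bound is one middle level, whose single-deletion trace is two consecutive levels of $[n]\setminus\{x\}$ and hence $K_{r,s}$-free since $e(K_{r,s})=2$. For the upper bound the deletion device shows that if two consecutive levels of $\cF$ were both thick, a single deletion would create a three-level trace containing $K_{r,s}$; so essentially one of any two consecutive levels is thin, collapsing the De Bonis--Katona bound $(2+o(1))\binom{n}{\lfloor n/2\rfloor}$ to a single level's worth $(1+o(1))\binom{n}{\lfloor n/2\rfloor}$. Finally (iii) is the $K_{r,s}$ analogue of (i) at the threshold $k=e(K_{r,s})$: deleting two elements manufactures the forbidden copy already from moderate up-degrees \emph{and} down-degrees, so I would run the chain argument in both directions — the upward pass giving the $(s+1)^2$ term and the downward pass the $(r+1)^2$ term — and add them, the only new bookkeeping being to track the two deleted coordinates while assembling the copy of $K_{r,s}$.
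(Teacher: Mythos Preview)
Your treatment of part (i) is essentially the paper's proof: the residue-class construction for the lower bound is identical, and your upper-bound scheme --- restrict to the central strip, use Mirsky to pass to at most $s$ antichains, then run an LYM/chain count on the shadow --- is exactly the paper's mechanism. What you flag as ``the real obstacle'' (the per-chain $O(s^2)$ bound inside one antichain) is precisely what the paper supplies via two short claims about the shadow $\cG$ of an antichain $\cF_2\subseteq\cF$: every maximal chain meets $\cG$ in at most $s$ sets, and every $G\in\cG$ has at most $s$ supersets in $\cF_2$. Both are two-line consequences of the $(n-1)$-trace $\bigvee_s$-free hypothesis, and together they give $\lambda(\cF_2)\le 3s^2/n$.

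For (ii) and (iii) the paper does something structurally different from your sketch, and your arguments there have gaps. The paper proves (iii) by the clean reduction
\[
Tr_{n-2}(n,K_{r,1,s})\ \le\ Tr_{n-1}(n,\bigvee\nolimits_{2s+1})\ +\ Tr_{n-1}(n,\bigwedge\nolimits_{2r+1}),
\]
obtained by first greedily extracting $Tr_{n-1}(n,\bigwedge_{2r+1})+1$ sets of $\cF$, each sitting at the bottom of a $\bigvee_{2s+1}$ in some $(n-1)$-trace, and then finding a $\bigwedge_{2r+1}$ among \emph{those} sets in another $(n-1)$-trace; the two deleted coordinates assemble into a $K_{r,1,s}$, and (i) is then invoked. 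Part (ii) is not a direct density argument at all but a special case of a more general statement about tree posets $T$ with a unique maximum: $Tr_{n-1}(n,T^s)\le La(n,T^{\otimes 2})+Tr_{n-1}(n,\bigvee_{s-1})+1$, proved by greedily extracting many copies of $T^{\otimes 2}$, finding a $\bigvee_{s-1}$ among their top sets in some trace, and observing that from $T^{\otimes 2}$ one can always salvage a copy of $T$ after a single deletion. For $T=\bigwedge_r$ this gives the $(1+o(1))$ bound via Bukh's theorem.

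Your proposed route to (ii) --- ``if two consecutive levels are thick, one deletion creates three levels and hence $K_{r,s}$, so the De Bonis--Katona $2+o(1)$ collapses to $1+o(1)$'' --- is where the genuine gap lies. After deleting $x$ the trace does sit in three consecutive levels of $[n]\setminus\{x\}$, but it is not three \emph{full} levels; you would still need a density argument to locate $K_{r,s}$ in a sparse three-level family, and you have not supplied one. Even granting that step, ``one of any two consecutive levels is thin'' does not by itself bring the total below $(1+o(1))\binom{n}{\lfloor n/2\rfloor}$: a family spread over $\Theta(\sqrt n)$ alternating thin/thick levels can still have total size close to $2\binom{n}{\lfloor n/2\rfloor}$ without any two consecutive thick levels. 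The paper's reduction sidesteps this entirely.
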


The smallest poset for which \cjref{bigpos} has not yet been proved is the diamond poset $D$ on four elements $a,b,c,d$ with $a< b,c < d$. The best known upper bound on $La(n,D)$ is due to Gr\'osz, Methuku, and Tompkins \cite{GMT}. We will prove that the moreover part of \cjref{ltrace} holds for $D$.

\begin{theorem}
\label{thm:diamond} For the diamond poset $D$ we have $$Tr_{n-2}(n,D)=O\left(\frac{1}{n^{1/3}}\binom{n}{\lfloor n/2\rfloor}\right).$$
\end{theorem}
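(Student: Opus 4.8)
The plan is to exploit that the $(n-2)$-trace condition is far stronger than plain $D$-freeness. First I would discard every set whose size lies outside $[n/3,2n/3]$; by the fact quoted at the start of the paper this costs only $o(\frac1n\binom{n}{\lfloor n/2\rfloor})$, which is negligible against the claimed bound $O(n^{-1/3}\binom{n}{\lfloor n/2\rfloor})$. Since $E(D)=4\le n-2$, \pref{monotone} shows that $\cF$ is itself $D$-free, so $\cF$ contains no genuine diamond; the whole difficulty is to extract the extra saving hidden in the traces, because $D$-freeness alone only yields $O(\binom{n}{\lfloor n/2\rfloor})$.

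Next I would pin down how a diamond is created in a trace. Fix $L=[n]\setminus\{x,y\}$ and suppose $G_0\subsetneq G_1,G_2\subsetneq G_3$ (with $G_1\neq G_2$) is a diamond in $\cF|_L$. Since $\cF$ has no diamond, at least one of these inclusions must be \emph{new}, i.e. produced by deleting $x,y$; the economical source is a bottom set $F_0\in\cF$ with $\{x,y\}\subseteq F_0$, whose trace $G_0=F_0\setminus\{x,y\}$ drops two levels and slips below sets to which $F_0$ was incomparable in $\cF$. Thus it suffices to produce the following configuration in $\cF$: a set $F_0\in\cF$, a pair $\{x,y\}\subseteq F_0$, and sets $M_1,M_2,T\in\cF$ with $x,y\notin M_1\cup M_2\cup T$ such that, writing $G_0=F_0\setminus\{x,y\}$, we have $G_0\subsetneq M_1,M_2\subsetneq T$ and $M_1\neq M_2$. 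Equivalently: $M_1,M_2\subsetneq T$ is a $\bigwedge$ in $\cF$, while $F_0=G_0\cup\{x,y\}$ lies in $\cF$ for some $G_0\subseteq M_1\cap M_2$ and some pair $\{x,y\}$ disjoint from $T$. Any such configuration yields a diamond in $\cF|_L$, contradicting $(n-2)$-trace $D$-freeness.

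The core step is therefore to show that a family of size exceeding $Cn^{-1/3}\binom{n}{\lfloor n/2\rfloor}$ must contain this configuration. Reformulating the constraint, $(n-2)$-trace $D$-freeness forces that for every $F_0\in\cF$ and every pair $\{x,y\}\subseteq F_0$, the link family $\{H\setminus G_0:\ H\in\cF,\ H\supseteq G_0,\ H\cap\{x,y\}=\emptyset\}$ contains no set properly containing two others, i.e. it is $\bigwedge_2$-free. A single such constraint is weak, since a $\bigwedge_2$-free family can be enormous (e.g. an antichain), so the argument must aggregate it over all $F_0$ and all $\binom{|F_0|}{2}=\Theta(n^2)$ pairs inside each $F_0$. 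I would set up a double count of the ``$\bigwedge$-with-pendant-bottom'' configurations above, bound the number of admissible $\bigwedge$'s in $\cF$ using its $D$-freeness, and combine this with the abundance of pairs $\{x,y\}\subseteq F_0$ through a Cauchy--Schwarz / K\H{o}v\'ari--S\'os--Tur\'an estimate; balancing the $\Theta(n^2)$ pairs against the per-top budget is what I expect to produce the exponent $1/3$.

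The main obstacle is precisely this aggregation, and it is caused by the $3$-chain buried in the diamond. For the complete bipartite posets $K_{r,s}$, which have height $2$, the paper already obtains the sharper $O(1/n)$ bound in \tref{2level}(iii), essentially because one may forbid a clean bipartite trace-configuration. The diamond contains $P_3$, and chains are never forbidden by trace conditions; hence the middle level cannot be ignored, and one must guarantee two genuinely distinct middle traces $M_1\neq M_2$ under a common top $T$. This both weakens the count, yielding $n^{-1/3}$ rather than $n^{-1}$, and forces careful bookkeeping to ensure that the four produced traces are distinct and correctly nested for at least one pair $\{x,y\}$. Verifying that the supersaturation survives the restriction to the middle window, and that the $\bigwedge_2$-free link constraints across the $\Theta(n^2)$ pairs cannot all hold once $|\cF|$ is too large, is where the real work lies.
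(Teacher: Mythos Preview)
Your plan has a genuine gap: the single forbidden configuration you isolate is far too weak to yield the bound. You reformulate the condition as ``for every $F_0\in\cF$ and every $\{x,y\}\subseteq F_0$, the link family $\{H\in\cF: H\supseteq F_0\setminus\{x,y\},\ x,y\notin H\}$ is $\bigwedge_2$-free''. But the full middle level $\binom{[n]}{\lfloor n/2\rfloor}$ satisfies this condition trivially (every link is an antichain of $2$-sets) and is also $D$-free, yet it has size $\binom{n}{\lfloor n/2\rfloor}$, not $O(n^{-1/3}\binom{n}{\lfloor n/2\rfloor})$. So no amount of double counting or Cauchy--Schwarz on this particular configuration can succeed; the ``pendant bottom that drops two levels'' scenario is only one of several ways a diamond appears in an $(n-2)$-trace, and it is precisely the configurations where \emph{all four} sets interact nontrivially with $\{x,y\}$ that carry the real restriction.

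The paper's argument is structurally quite different. It fixes a symmetric chain partition $\mathbb{C}$ of $2^{[n]}$ and, for each chain $\cC$, builds a graph $G_\cC$ on vertex set $[n]$ whose edges are the pairs $e$ with $C\cup e\in\cF$ for some $C\in\cC$; every $F\in\cF$ contributes $\binom{|F|}{2}\asymp n^2$ edges across these graphs, with bounded multiplicity. The crucial combinatorial step is an edge-labelling lemma (label $e$ by $|C|$): the $(n-2)$-trace $D$-free condition forces that in any $4$-cycle of $G_\cC$ the edges of smallest and largest label are adjacent, since a $4$-cycle $e_1e_2e_3e_4$ with opposite edges extremal would make $F_1,F_2,F_3,F_4$ restrict to a diamond on $[n]\setminus e_1$. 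A short Erd\H os--Szekeres argument then shows such a labelled graph cannot contain $K_{3,17}$, whence K\H ov\'ari--S\'os--Tur\'an gives $e(G_\cC)=O(n^{5/3})$ for every chain; summing over the $\binom{n}{\lfloor n/2\rfloor}$ chains and comparing with the lower bound $\Theta(n^2)|\cF|$ produces the exponent $1/3$. The graph to which K\H ov\'ari--S\'os--Tur\'an is applied lives on $[n]$ and is indexed by a chain of the partition; this is the missing structural idea in your outline.
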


The remainder of this paper is organized as follows: Section 2 deals with trace $P$-free families, \tref{B} along with some further remarks are shown there. A general result on $(n-1)$-traces of families that implies \tref{2level} is proved in Section 3 along with \tref{diamond} and other results about $l$-trace $P$-free families. Finally, Section 4 contains some concluding remarks.

\section{Trace $P$-free families}

\tref{sauer} has many proofs in the literature. One of them (obtained independently by Alon \cite{Al} and Frankl \cite{F}) uses down-compression. For a set $F$ and an element $i$, the \textit{down-compression operator} is defined as $$D_i(F):=F\setminus \{i\},$$ and for a family of sets $\cF$ we define $$D_i(\cF):=\{D_i(F): F\in \cF, \ D_i(F)\notin \cF\}\cup \{F:F, D_i(F)\in\cF\}.$$ It was proved in \cite{Al,F} that if we are given a family $\cF\subseteq 2^{[n]}$ such that there does not exist a $k$-set $X$  with $\cF|_X=2^X$, then the same holds for $D_i(\cF)$ for any $i \in [n]$. As any family $\cF$ can be turned into a \textit{downward closed} family (a family $\cD$ for which $C\subset D\in \cD$ implies $C\in \cD$) by applying a finite number of down-compressions, to prove \tref{sauer} it is enough to show its statement for downward closed families, which is rather straightforward. 

Observe that the trace $P$-free property is not preserved by down-compression, however there is a way how to obtain bounds on $Tr(n,P)$ by considering only downward closed families. Frankl in \cite{F} introduced the arrow relation $(n,m)\rightarrow (k,l)$ which, by definition, holds if for any family $\cF\subseteq 2^{[n]}$ of size $m$, there exists a $k$-set $X$ such that $|\cF|_X|\ge l$. With this notation \tref{sauer} can be formulated as $$(n,1+\sum_{i=0}^{k-1}\binom{n}{i})\rightarrow (k,2^k)$$ for any pair $n\ge k$. Frankl used down-compression to prove the following.

\begin{theorem}[\cite{F}]\label{thm:down}
The following statements are equivalent.

\vspace{2mm}

(i) $(n,m)\rightarrow (k,l)$

\vspace{1mm}

(ii) For every downward closed family $\cD\subseteq 2^{[n]}$ of size $m$, there exists a $k$-set $X$ such 

that $|\cD|_X|\ge l$.
\end{theorem}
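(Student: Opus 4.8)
The plan is to prove the nontrivial implication (ii) $\Rightarrow$ (i), since (i) $\Rightarrow$ (ii) is immediate: a downward closed family of size $m$ is in particular a family of size $m$, so the defining property of the arrow relation applies to it verbatim. For (ii) $\Rightarrow$ (i) I would start from an arbitrary family $\cF\subseteq 2^{[n]}$ of size $m$, push it to a downward closed family by repeatedly applying the down-compression operators $D_i$, and then transfer the conclusion back to $\cF$. Two structural facts about $D_i$ drive everything. First, $D_i$ preserves cardinality, $|D_i(\cF)|=|\cF|$, because it acts as an injection on $\cF$: each set either remains or is replaced by its down-shift, the latter happening precisely when that down-shift is not already present. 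Second, applying $D_i$ never increases the potential $\sum_{F\in\cF}|F|$ and strictly decreases it whenever $D_i(\cF)\neq\cF$, so a finite sequence of compressions terminates in a family $\cD$ that is stable under every $D_i$; stability under all $D_i$ is exactly the assertion that $\cD$ is downward closed. Thus from $\cF$ we obtain a downward closed $\cD$ with $|\cD|=|\cF|=m$.

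The heart of the argument is the monotonicity lemma: for every family $\cF$, every $i\in[n]$, and every set $X$, one has $|D_i(\cF)|_X|\le |\cF|_X|$. Granting this, if $\cD=D_{i_t}(\cdots D_{i_1}(\cF)\cdots)$, then iterating the inequality along the chain of compressions gives $|\cF|_X|\ge |\cD|_X|$ for every $X$. Applying hypothesis (ii) to $\cD$ produces a $k$-set $X$ with $|\cD|_X|\ge l$, and hence $|\cF|_X|\ge l$, which is precisely the statement $(n,m)\rightarrow(k,l)$.

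To prove the monotonicity lemma I would split on whether $i\in X$. If $i\notin X$, then $D_i(F)\cap X=F\cap X$ for every set $F$, so the compression induces a trace-preserving bijection between $\cF$ and $D_i(\cF)$ and we get $|D_i(\cF)|_X|=|\cF|_X|$ exactly. The genuine case is $i\in X$, where I would organize the traces into the pairs $\{S,\,S\cup\{i\}\}$ indexed by $S\subseteq X\setminus\{i\}$ and show that for each such $S$ the number of these two traces realized in $D_i(\cF)$ is at most the number realized in $\cF$. The bookkeeping is short: if $S\cup\{i\}$ survives as a trace of $D_i(\cF)$, then its witness contains $i$ and so must lie in the stable part $\{F:F,D_i(F)\in\cF\}$ of the definition, forcing both $S$ and $S\cup\{i\}$ to already be traces of $\cF$; and if $S$ is a trace of $D_i(\cF)$, its witness either was present in $\cF$ (giving $S$ as a trace of $\cF$) or arose by compressing $S\cup\{i\}$ (giving $S\cup\{i\}$ as a trace of $\cF$). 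Summing these per-column inequalities over all $S$ yields the lemma.

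I expect the main obstacle to be exactly this $i\in X$ case of the monotonicity lemma. One must use the full definition of $D_i(\cF)$—in particular the stable second set $\{F:F,D_i(F)\in\cF\}$—to rule out the creation of a new trace in the column of some $S$, rather than merely tracking the size of the whole family; this is the point where the trace-$P$-free intuition that ``compression can spoil structure'' is replaced by the correct statement that compression cannot \emph{increase} the count of distinct traces. Everything else—cardinality preservation, termination at a downward closed family, and the final telescoping of the inequalities—is routine once this lemma is in hand.
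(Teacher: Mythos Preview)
The paper does not give its own proof of this theorem; it is stated with attribution to Frankl and used as a black box. What the paper does sketch, in the paragraph preceding the theorem, is the compression machinery in the special setting of the Sauer lemma: that $D_i$ preserves the property ``no $k$-set is shattered'' and that finitely many applications of $D_i$ lead to a downward closed family. Your proposal is the natural and correct strengthening of that sketch to the general arrow relation. The crucial upgrade is your monotonicity lemma $|D_i(\cF)|_X|\le |\cF|_X|$, which is exactly the inequality needed to handle arbitrary $l$ rather than just $l=2^k$; your column-by-column bookkeeping in the case $i\in X$ is correct and is the standard argument. In short: the proposal is correct, and it matches Frankl's original approach that the paper is citing.
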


We want to make use of \tref{down} to determine $Tr(n,P)$. In order to do that we make two simple observations. First note that if for some $k$-set $X$ the trace $\cF|_X$ contains more than $La(k,P)$ sets, then $\cF$ cannot be trace $P$-free. Therefore we obtain the following.

\begin{proposition}\label{prop:prop1}
For every poset $P$ we have $$Tr(n,P)\le\min \{m:\exists k\, (n,m)\rightarrow (k,La(k,P)+1)\}-1.$$
\end{proposition}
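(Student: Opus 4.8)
The plan is to argue by contradiction, leveraging the observation stated immediately before the proposition together with the definitions of the arrow relation and of $La(k,P)$. Write $m^\ast=\min\{m:\exists k\,(n,m)\rightarrow(k,La(k,P)+1)\}$, and fix an integer $k$ witnessing this minimum, so that $(n,m^\ast)\rightarrow(k,La(k,P)+1)$. I would then show that every trace $P$-free family $\cF\subseteq 2^{[n]}$ satisfies $|\cF|<m^\ast$, which is exactly the claimed bound $Tr(n,P)\le m^\ast-1$.

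First I would suppose, for contradiction, that some trace $P$-free family $\cF$ has $|\cF|\ge m^\ast$, and pass to a subfamily $\cF'\subseteq\cF$ of size exactly $m^\ast$. Applying the arrow relation $(n,m^\ast)\rightarrow(k,La(k,P)+1)$ to $\cF'$ yields a $k$-set $X\subseteq[n]$ with $|\cF'|_X|\ge La(k,P)+1$. Since $\cF'\subseteq\cF$ gives $\cF'|_X\subseteq\cF|_X$, we obtain $|\cF|_X|\ge La(k,P)+1>La(k,P)=La(|X|,P)$. By definition of $La$, any subfamily of $2^X$ with more than $La(|X|,P)$ members contains a copy of $P$; hence $\cF|_X$ is not $P$-free. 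This contradicts the trace $P$-freeness of $\cF$, which requires $\cF|_X$ to be $P$-free for the $k$-set $X$. Therefore $|\cF|<m^\ast$, as desired.

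This argument is essentially a formalization of the remark preceding the statement, so no deep difficulty is expected. The one point that needs care is the passage from a family of size at least $m^\ast$ to one of size exactly $m^\ast$: this uses the elementary monotonicity that restricting to a subfamily can only shrink each trace, so a $k$-set witnessing a large trace for $\cF'$ also witnesses a large trace for $\cF$. The optimization over $k$ is absorbed entirely into the definition of $m^\ast$, so it suffices to work with a single witnessing $k$ throughout.
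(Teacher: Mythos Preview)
Your proposal is correct and follows exactly the paper's approach: the paper does not give a formal proof but simply records the observation that a trace with more than $La(k,P)$ sets forces a copy of $P$, and your argument is a careful formalization of this (including the minor but correct step of passing to a subfamily of size exactly $m^\ast$ so the arrow relation applies verbatim).
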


One can go one step further and improve \pref{prop1}. Suppose one determined the value of $Tr(k,P)$ for some small integer $k$. Then obviously, if for some $k$-set $X$ the trace $\cF|_X$ contains more than $Tr(k,P)$ sets, then $\cF$ cannot be trace $P$-free, so we obtain the following.

\begin{proposition}\label{prop:prop2}
For every poset $P$ we have $$Tr(n,P)\le\min \{m:\exists k\, (n,m)\rightarrow (k,Tr(k,P)+1)\}-1.$$
\end{proposition}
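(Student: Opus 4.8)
The plan is to mirror the argument sketched just before \pref{prop1}, but to replace the bound $La(k,P)$ on the size of a trace by the stronger bound $Tr(k,P)$. The entire content is a single structural observation: the trace of a trace $P$-free family on any subset of the ground set is again trace $P$-free on that smaller ground set. First I would record why this holds, and then assemble the statement by a short contradiction argument built on the arrow relation.

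The key step is the composition identity for traces: if $Y\subseteq X\subseteq [n]$, then $(\cF|_X)|_Y=\cF|_Y$. This is immediate from $(F\cap X)\cap Y=F\cap Y$, valid precisely because $Y\subseteq X$. Consequently, if $\cF$ is trace $P$-free and $X$ is any $k$-set, then for every $Y\subseteq X$ we have $(\cF|_X)|_Y=\cF|_Y$, and the latter is $P$-free by the assumption that $\cF$ is trace $P$-free. Hence $\cF|_X$, viewed as a family on the ground set $X$ of size $k$, is itself trace $P$-free, so that $|\cF|_X|\le Tr(k,P)$. This upgrading of ``$P$-free'' to ``trace $P$-free'' for the trace $\cF|_X$ is exactly what lets $Tr(k,P)$ replace $La(k,P)$, and it is the only genuinely delicate point of the argument.

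Next I would run the contradiction. Write $m^\ast$ for the minimum on the right-hand side, so that $(n,m^\ast)\rightarrow(k,Tr(k,P)+1)$ for some $k$. Suppose toward a contradiction that some trace $P$-free family $\cF\subseteq 2^{[n]}$ satisfies $|\cF|\ge m^\ast$. Passing to a subfamily of size exactly $m^\ast$ preserves the trace $P$-free property, since $\cF'|_L\subseteq\cF|_L$ for $\cF'\subseteq\cF$ and any subfamily of a $P$-free family is $P$-free; so I may assume $|\cF|=m^\ast$. By the arrow relation there is a $k$-set $X$ with $|\cF|_X|\ge Tr(k,P)+1$, contradicting the bound $|\cF|_X|\le Tr(k,P)$ established above. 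Therefore $Tr(n,P)\le m^\ast-1$, as claimed.

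I expect no serious obstacle; the composition identity and its consequence for $\cF|_X$ are the whole proof, the rest being a routine extremal argument. I would also remark that since a trace $P$-free family on $[k]$ is in particular $P$-free, we have $Tr(k,P)\le La(k,P)$, so \pref{prop2} is formally at least as strong as \pref{prop1}, which is the reason for stating it separately.
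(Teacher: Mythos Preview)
Your argument is correct and is exactly the paper's approach: the paper states the proposition as an immediate consequence of the observation that if $|\cF|_X|>Tr(k,P)$ for some $k$-set $X$ then $\cF$ cannot be trace $P$-free, and you have simply made explicit the composition identity $(\cF|_X)|_Y=\cF|_Y$ that underpins this observation. Your added remark that $Tr(k,P)\le La(k,P)$ makes \pref{prop2} at least as strong as \pref{prop1} also matches the paper's framing of ``going one step further.''
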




\vskip0.3truecm

Now we continue with the proof of \tref{B}.

\begin{lemma}\label{lem:dani0}
We have $$Tr(5,B)=8.$$
\end{lemma}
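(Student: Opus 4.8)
The plan is to prove the two bounds $Tr(5,B)\ge 8$ and $Tr(5,B)\le 8$ separately, the first by a construction and the second by a structural analysis of the restrictions of a trace $B$-free family to its small subsets. For the lower bound I would take the family
$$\cF_0=\{\emptyset\}\cup\binom{[5]}{1}\cup\{\{1,2\},\{3,4\}\},$$
which has $1+5+2=8$ members, and check directly that it is trace $B$-free. The point is that in any restriction $\cF_0|_L$ the only traces of size at least $2$ come from the two disjoint pairs $\{1,2\},\{3,4\}$ (the empty set and the singletons all trace to sets of size at most $1$), and a pair survives as a $2$-element trace only when it is contained in $L$. A butterfly needs two distinct tops, each of size at least $2$ in the trace; hence both tops would have to be $\{1,2\}|_L$ and $\{3,4\}|_L$. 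But these are disjoint, so their only common lower trace is $\emptyset$, and a single common lower set cannot provide the two distinct bottoms of a copy of $B$. Thus no $\cF_0|_L$ contains $B$, giving $Tr(5,B)\ge 8$.

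For the upper bound the elementary but essential observation is that trace $B$-freeness is inherited by restrictions: if $\cF$ is trace $B$-free then for every $X\subseteq[5]$ the family $\cF|_X$ is again trace $B$-free on $X$, so $|\cF|_X|\le Tr(|X|,B)$. I would therefore first pin down the small values and, crucially, the extremal structures: a direct check gives $Tr(2,B)=4$, $Tr(3,B)=6$ and $Tr(4,B)=7$, and I would prove the rigidity statement that the only $6$-element trace $B$-free family on a $3$-set $X$ is the union of its two middle levels $\binom{X}{1}\cup\binom{X}{2}$, so that attaining $6$ forces $\emptyset\notin\cF|_X$ and $X\notin\cF|_X$. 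These rigidity facts lie beyond what the compression machinery can see: feeding $Tr(4,B)=7$ into \pref{prop2} and evaluating the resulting arrow relation $(5,m)\to(4,8)$ via \tref{down} on downward closed families only yields $Tr(5,B)\le 9$. The reason is that down-compression destroys trace $B$-freeness, and the subcube-counting bound is attained by downward closed families such as $\{\emptyset\}\cup\binom{[5]}{1}\cup\{\{1,2\},\{3,4\},\{1,5\}\}$, which satisfy the cardinality constraints on every $4$-subcube yet are not themselves trace $B$-free.

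It therefore remains to rule out $|\cF|=9$, and this finite but genuinely structural step is where I expect the real work to lie. Here I would argue directly on $2^{[5]}$, using the complementation symmetry $F\mapsto[5]\setminus F$ (which preserves trace $B$-freeness, since complementation inside each $L$ preserves $B$-freeness) to normalise the size distribution, and then combining the five restrictions to the $4$-subsets $X_i=[5]\setminus\{i\}$ with the rigidity of the extremal $3$- and $4$-traces to show that a trace $B$-free family of size $9$ would be forced into incompatible local patterns on overlapping subsets. The main obstacle is precisely this case analysis: the soft bounds are off by exactly one, so one must exploit the precise shape of the extremal local traces, rather than their mere cardinalities, to close the gap from $9$ down to $8$.
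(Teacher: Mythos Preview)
Your lower bound construction and its verification are correct and coincide with the paper's (it is the $n=5$ case of the family used for the general lower bound in \tref{B}). Your upper bound outline is plausible but leaves the substantive steps unspecified, and is organised less efficiently than the paper's argument.

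The paper does not compute $Tr(4,B)$ at all. Instead it proves two easy arrow relations $(5,9)\to(3,6)$ and $(5,9)\to(4,7)$ via \tref{down}. From the first, a hypothetical trace $B$-free $\cF\subseteq 2^{[5]}$ with $|\cF|=9$ has some $3$-set $X$ with $|\cF|_X|\ge 6$, and by the rigidity you also identify, $\cF|_X$ is forced to be $\binom{X}{1}\cup\binom{X}{2}$. The heart of the proof is a structural claim (\clref{dani} in the paper): given $\cF|_{[3]}=\binom{[3]}{1}\cup\binom{[3]}{2}$ and some $F\in\cF$ meeting $4$, the family $\cF|_{[4]}$ must be one of three explicit $6$-element families. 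The crucial output is $|\cF|_{[4]}|=6$, not merely $\le 7$. The paper then checks that the same bound $|\cF|_Y|\le 6$ holds for every $4$-subset $Y$ (either the $3$-trace rigidity applies again, or $\cF|_{Y\setminus\{5\}}$ is a diamond and one uses $|\cF|_Y|\le La(3,B)=6$), and this contradicts $(5,9)\to(4,7)$.

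Your plan routes through $Tr(4,B)=7$, but establishing that value is not really a ``direct check'': a putative $8$-element trace $B$-free family on $[4]$ has, by $(4,8)\to(3,6)$, a $3$-trace equal to the middle levels, and ruling out an extension to $8$ sets on $[4]$ requires a case analysis essentially equivalent to \clref{dani}. Having done that work, you still only obtain $|\cF|_Y|\le 7$ for each $4$-subset $Y$, which as you correctly observe is too weak to finish via arrow relations. Your proposed remedy---classifying the extremal $7$-element trace $B$-free families on $[4]$ and showing that five such local patterns cannot be glued compatibly over $[5]$---is a second nontrivial case analysis stacked on top of the first. The paper's organisation avoids this duplication: the single case analysis of \clref{dani} already yields the sharper bound $|\cF|_{[4]}|\le 6$, and the contradiction then follows immediately from the easily verified relation $(5,9)\to(4,7)$.
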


\begin{proof} We start with the following simple claim.
\begin{claim}\label{clm:Barrow} We have
$$(5,9)\rightarrow (3,6) \ \textrm{ and } \ (5,9)\rightarrow (4,7).$$
\end{claim}

\begin{proof}[Proof of Claim]
By \tref{down}, it is enough to prove the statement for downward closed families $\cD\subseteq 2^{[5]}$ of size 9. If $\cD$ contains a set $D$ of size 3, then $|2^{D}|=8\ge 6$. Otherwise $\cD$ contains at least 3 sets of size 2. As they are all subsets of $[5]$, for two of them $D_1,D_2$, we have $|D_1 \cup D_2|=3$ and as $\cD$ is downward closed, we have $|\cD\cap 2^{D_1\cup D_2}|\ge 6$.

Similarly we have either three 2-sets on three vertices or two 2-sets on four vertices, both cases give 7 sets on three or four vertices.
\end{proof}

Suppose $\cF\subseteq 2^{[5]}$ is a $B$-trace free family of size 9. Then by \clref{Barrow} there exists a 3-set $X$ with $|\cF|_X|\ge 6$. We may suppose that $X=[3]$ and as $\cF$ is $B$-trace free, we must have $$\cF|_{[3]}=\binom{[3]}{1}\cup \binom{[3]}{2}.$$

\begin{claim}\label{clm:dani}
Suppose there is a set $F\in \cF$ with $4\in F$. Then we have either 

\vspace{2mm}

$\bullet$ $\cF|_{[4]}=\binom{[4]}{2}$, or 

\vspace{1mm}

$\bullet$ $\cF|_{[4]}$ is isomorphic to $\{\{2\}, \{3\}, \{1,4\}, \{2,3\}, \{1,2,4\}, \{1,3,4\}  \}$, or 

\vspace{1mm}

$\bullet$ $\cF|_{[4]}$ is isomorphic to $\{\{1,4\}, \{2,4\}, \{3,4\}, \{1,2,4\}, \{1,3,4\}, \{2,3,4\}  \}$.

\end{claim}

\begin{proof}[Proof of Claim] The set $F$ intersects $[3]$ in a $1$ or $2$-element set. We separate cases according to this. We introduce the notation $\cA:=\cF|_{[4]}$, $\cA_i:=\cF|_{[4]\setminus \{i\}} \ (i \in [4])$. In particular, we have seen so far that $\cF|_{[3]}=\cA_4=\binom{[3]}{1}\cup \binom{[3]}{2}$.

\vspace{3mm}

\textbf{Case 1}. $\{1,4\}\in F|_{[4]}= \cA$.

\vspace{2mm}

\hspace{3mm}\textbf{Case 1.1}. $\{1,2,4\}\in \cA$. Let us consider $\cA_3$. We have $\{1,2,4\}$ and $\{1,4\}$ are in $\cA_3$. Also as we have $\{3\}\in \cA_4$ we have either $\emptyset$ or $\{4\}$ is in $\cA_3$. Thus we cannot have $\{1\}\in\cA_3$, hence $\{1,3\}\not\in \cA$. As we have $\{1,3\}\in \cA_4$ we must have $\{1,3,4\}\in\cA$. 

Also only one of $\{2\}$ or $\{2,4\}$ can be in $\cA$ as otherwise they would form a copy of $B$ in $\cA_3$ with $\{1,2,4\}$ and $\emptyset$ or $\{4\}$. 

\vspace{2mm}

\hspace{5mm}\textbf{Case 1.1.1}. $\{2\}\in \cA$. In this case $\{2,3,4\}\not\in \cA$, otherwise $\cA_1$ would contain $\{2\}$, $\{4\}$, $\{2,4\}$ and $\{2,3,4\}$. As $\{2,3\}\in \cA_4$, we must have $\{2,3\}\in\cA$. Thus we know $\{1,4\}$, $\{1,2,4\}$, $\{1,3,4\}$, $\{2\}$, $\{2,3\}$ are all in $\cA$. If $\{3,4\}$ was in $\cA$, then $\cA_2$ would contain $\{1,3,4\}, \{3,4\}, \{3\}$ and $\emptyset$, a contradiction. As $\{3\}\in \cA_4$, we must have $\{3\}\in\cA$. It is easy to see that no other set can be added in this case.

\vspace{2mm}

\hspace{5mm}\textbf{Case 1.1.2}. $\{2\}\not\in \cA$.
As $\{2\}\in \cA_4$, we must have $\{2,4\}\in \cA$. 

\vspace{2mm}

\hspace{8mm}\textbf{Case 1.1.2.1}.
$\{2,3\} \in \cA$. Then $\{3,4\}$ cannot be in $\cA$, as that would give $\{3\}$, $\{4\}$, $\{3,4\}$, $\{1,3,4\}$ in $\cA_2$. As $\{3\}\in \cA_4$, we have $\{3\}\in \cA$, but then $\cA_3$ contains $\emptyset$, $\{2\}$, $\{2,4\}$ and $\{1,2,4\}$, a contradiction. 

\vspace{2mm}

\hspace{8mm}\textbf{Case 1.1.2.2}. $\{2,3\} \not\in \cA$. As $\{2,3\} \in \cA_4$, we have that $\{2,3,4\} \in \cA$. If $\{3\}$ is in $\cA$, then $\cA_2$ contains $\{3\}$, $\{4\}$, $\{3,4\}$ and $\{1,3,4\}$, a contradiction. So $\{3\}\not\in \cA$, but $\{3\}\in \cA_4$, hence we must have $\{3,4\}\in\cA$. Thus $\{1,2,4\}$, $\{1,4\}$, $\{1,3,4\}$, $\{2,4\}$, $\{2,3,4\}$, $\{3,4\}$ are in $\cA$. Note that every additional set would create a copy of $B$ in $\cA_1$ except for $\{2,3\}$, $\{1,2,3\}$ and $\{1,2,3,4\}$. However, in this case $\{2,3\}$ is not in $\cA$ and neither $\{1,2,3\}$ nor $\{1,2,3,4\}$ can be in $\cA$ because $\{1,2,3\}\not\in\cA_4$.

\vspace{4mm}

\hspace{3mm}\textbf{Case 1.2}. $\{1,2,4\}\not\in \cA$. By symmetry we can also assume $\{1,3,4\}\not\in \cA$, otherwise we go back to Case 1.1. Hence we have $\{1,2\}, \{1,3\}\in \cA$. Then $\{1\},\{1,2\},\{1,4\}\in \cA_3$, thus $\emptyset$ cannot be in $\cA_3$, hence $\{3\}\not\in \cA$, thus $\{3,4\}\in\cA$, and similarly $\{2,4\}\in\cA$. Let us consider $\cA_1$ now. It contains $\{3\},\{3,4\},\{4\}$, thus it cannot contain $\{2,3,4\}$, hence $\{2,3,4\}\not\in\cA$, thus $\{2,3\}\in\cA$, i.e $\cA$ contains $\binom{[4]}{2}$. It is easy to see that no other set can be added.

\vspace{3mm}

\textbf{Case 2}. There are no 2-element sets in $\cA$ that contain 4. Then $\{1\},\{2\},\{3\}\in\cA$. We may assume $\{1,2,4\}=F|_{[4]}\in\cA$. Let us consider $\cA_1$. It contains $\emptyset,\{2\},\{2,4\}$ by the above. Also as $\{2,3\}\in \cA_4$, we have either $\{2,3\}$ or $\{2,3,4\}$ in $\cA$, and any of these complete a copy of $B$.

\vspace{2mm}

We are done with the proof of \clref{dani}.
\end{proof}

We are now ready to prove \lref{dani0}. Notice that in all cases of \clref{dani}, we have $|\cF|_{[4]}|=6$. We will show that $|\cF|_Y|\le 6$ holds for every other 4-element subset $Y$ of $[5]$ as well, which contradicts $(5,9)\rightarrow (4,7)$. 

Let us consider the possible outcomes of \clref{dani}. Let $Z=Y\setminus \{5\}$, then we have either $\cF|_Z=\binom{Z}{1}\cup \binom{Z}{2}$ or $\cF|_Z$ is a copy of the diamond poset. In the first case we can apply \clref{dani} this time $[3]$ replaced by $Y\cap [4]$ and $4$ by $5$ to obtain $|\cF|_Y| \le 6$. In the second case notice that $\cF|_Y\subseteq \cF|_Z\cup \{F\cup \{5\}:F\in \cF|_Z\}$. As this latter family is a copy of $2^{[3]}$, to ensure the $B$-free property, we must have $|\cF|_Y|\le 6$.


\end{proof}

\begin{lemma}\label{lem:B2} If $n\ge 6$, then we have $$(n,\lfloor 3n/2\rfloor +2) \rightarrow (5,9).$$
\end{lemma}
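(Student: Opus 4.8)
The plan is to use Frankl's reduction to downward closed families and then translate the problem into elementary graph theory. By the definition of the arrow relation together with \tref{down}, it suffices to show that every downward closed family $\cD\subseteq 2^{[n]}$ with $|\cD|=\lfloor 3n/2\rfloor+2$ admits a $5$-set $X$ with $|\cD|_X|\ge 9$; note that $|\cD|\ge 11$ for $n\ge 6$. First I would dispose of large sets: if $\cD$ contains a set of size at least $3$, then being downward closed it contains a $3$-set $T$, and $2^T\subseteq\cD$ already contributes $8$ traces on any $5$-set $X\supseteq T$. Since $|\cD|>8$, some member of $\cD$ lies outside $2^T$, so $\cD$ contains a singleton $\{v\}$ with $v\notin T$; choosing a $5$-set $X\supseteq T\cup\{v\}$ gives $\cD|_X\supseteq 2^T\cup\{\{v\}\}$ and hence $|\cD|_X|\ge 9$.

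The main case is $\cD\subseteq\binom{[n]}{\le 2}$. Here I would encode $\cD$ by the set $S=\{i:\{i\}\in\cD\}$ of singletons and the graph $G$ on $[n]$ whose edges are the pairs in $\cD$ (so that $V(G)\subseteq S$). Writing $s=|S|$ and $e=|E(G)|$, we have $|\cD|=1+s+e$, and the key identity is that for every $5$-set $X$,
$$|\cD|_X|=1+|S\cap X|+e(G[X]),$$
where $e(G[X])$ counts the edges of $G$ inside $X$; this holds because the distinct nonempty traces are exactly the singletons $\{i\}$ with $i\in S\cap X$ and the pairs of $G$ contained in $X$. Thus it is enough to find a $5$-set $X\subseteq S$ with $e(G[X])\ge 3$, which yields $|\cD|_X|\ge 1+5+3=9$. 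Since $s\le n$, the hypothesis forces $e=\lfloor 3n/2\rfloor+1-s\ge\lfloor n/2\rfloor+1$, so $e\ge 4$ and $e$ exceeds $\lfloor n/2\rfloor$, the maximum size of a matching on $[n]$; in particular $G$ is not a matching.

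It remains to locate three edges spanning at most five vertices of $S$. If $G$ has a vertex of degree at least $3$, then it together with three of its neighbours already gives three edges on four vertices of $S$. The main obstacle is the case $\Delta(G)\le 2$, which is precisely where the extremal matching construction lives and where the single extra set over $\lfloor 3n/2\rfloor+1$ must be exploited: here $G$ is a disjoint union of paths and cycles, and since $e$ exceeds the matching number, $G$ contains a path $x$-$y$-$z$. If some edge is disjoint from $\{x,y,z\}$ we immediately obtain three edges on five vertices; otherwise the degree bound forces every edge to meet $\{x,y,z\}$, which restricts all edges to a single short path (a $P_5$) and can only occur for the smallest $n$, where that path again supplies five vertices carrying at least three edges. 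I expect this $\Delta(G)\le 2$ analysis to be the only delicate part.

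Finally, to realise the chosen configuration as a $5$-set inside $S$ I would verify that $s\ge 5$, so that any $Y\subseteq V(G)\subseteq S$ of size at most $5$ carrying three edges can be padded within $S$ to a $5$-set $X$ with $e(G[X])\ge 3$ and $|S\cap X|=5$. If instead $s\le 4$, then $V(G)\subseteq S$ has at most four vertices, forcing $e\le 6$ and $s+e\le 10$; this meets $s+e=\lfloor 3n/2\rfloor+1$ only when $n=6$ and $\cD=\{\emptyset\}\cup\binom{A}{1}\cup\binom{A}{2}$ for a $4$-set $A$, in which single exceptional case any $5$-set containing $A$ has trace of size $11\ge 9$. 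Combining the cases completes the argument.
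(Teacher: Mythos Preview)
Your proof is correct and follows essentially the same strategy as the paper: reduce to downward closed families via \tref{down}, dispose of the case where $\cD$ contains a $3$-set by adjoining an outside singleton, and then treat the case $\cD\subseteq\binom{[n]}{\le 2}$ as a graph problem on the $2$-sets of $\cD$. The only difference is organizational: the paper looks directly for a two-edge path together with a disjoint edge (and otherwise argues the edge set is a triangle or a star), whereas you use the trace formula $|\cD|_X|=1+|S\cap X|+e(G[X])$ and split on the maximum degree of $G$, handling the boundary case $|S|\le 4$ explicitly; in your $\Delta(G)\le 2$ subcase the ``all edges meet $\{x,y,z\}$'' configuration can also be a $4$-cycle, not only a $P_5$, but this still yields three edges on at most five vertices, so the conclusion is unaffected.
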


\begin{proof}
It is enough to verify the statement for downward closed families $\cD \subseteq 2^{[n]}$ of size $\lfloor 3n/2\rfloor+2$. If $\cD$ contains a set $D$ of size 3, then there exists $x\notin D$ with $\{x\}\in \cD$, and thus $|\cD|_{D\cup \{x\}}|\ge 9$. So we may assume $\cD \subseteq \binom{[n]}{\le 2}$. If $\cD$ does not contain two 2-sets with non-empty intersection, then $|\cD\cap \binom{[n]}{2}|\le \lfloor n/2\rfloor$ and we are done. If $D_1,D_2\in\cD$ are 2-sets with non-empty intersection and $D_3\in \cD\cap \binom{[n]}{2}$ is disjoint from $D_1\cup D_2$, then $\cD|_{D_1\cup D_2\cup D_3}\supseteq 2^{D_1}\cup 2^{D_2}\cup 2^{D_3}$ and we are done. 

This mean that $\cD\cap \binom{[n]}{2}$ is either a triangle or a star. In the former case we have $|\cD|\le 3+n+1<\lfloor 3n/2\rfloor +2$. In the latter case, if the star consists of at most 3 sets, then again  $|\cD|\le 3+n+1<\lfloor 3n/2\rfloor +2$, while if the star consists of at least 4 sets $D_1,D_2,D_3,D_4$, then $|\cD|_{D_1\cup D_2 \cup D_3 \cup D_4}|= 10$.
\end{proof}

Now the upper bound in \tref{B} follows from \pref{prop2}, \lref{dani0} and \lref{B2}. For the lower bound we consider a family that consists of the empty set, all the $1$-element sets, and $\lfloor n/2 \rfloor$ pairwise disjoint $2$-element sets. It is easy to see that this family does not contain the butterfly poset, and as it is downward closed, it does not contain it as a trace either. This finishes the proof of \tref{B}. $\hfill\qed$
\vskip 0.2truecm
We state the last observation as a general lower bound. Let

\vskip 0.2truecm
 $$La_D(n,P):=\max\{|\cF|\subseteq 2^{[n]}: \cF\, \textrm{is $P$-free and downward closed}\}, \ \textrm{and} $$ $$La_U(n,P):=\max\{|\cF|\subseteq 2^{[n]}: \cF\, \textrm{is $P$-free and upward closed}\}.$$ 

\vspace{3mm}

\begin{proposition}\label{prop3} We have $$Tr(n,P)\ge \max\{La_D(n,P),La_U(n,P)\}.$$

\end{proposition}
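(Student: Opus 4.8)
The plan is to show separately that each of $La_D(n,P)$ and $La_U(n,P)$ is a lower bound for $Tr(n,P)$, by verifying that any downward closed (respectively, upward closed) $P$-free family is automatically trace $P$-free. The extremal families realising these two quantities then serve directly as trace $P$-free families.

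For the downward closed bound, the key observation is that the trace of a downward closed family on any set $L$ is a \emph{subfamily} of the family itself. Indeed, if $\cF$ is downward closed and $F\in\cF$, then $F\cap L\subseteq F$ forces $F\cap L\in\cF$, so $\cF|_L\subseteq\cF$. If moreover $\cF$ is $P$-free, then every trace $\cF|_L$, being a subfamily of a $P$-free family, is $P$-free as well; hence $\cF$ is trace $P$-free. Taking $\cF$ to be an extremal downward closed $P$-free family yields $Tr(n,P)\ge La_D(n,P)$. This is exactly the reasoning already used for the lower-bound construction in \tref{B}.

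For the upward closed bound the traces are no longer subfamilies, so instead I would lift a forbidden copy from a trace back into the family. Suppose $\cF$ is upward closed and $P$-free, and suppose for contradiction that some trace $\cF|_L$ contains a copy of $P$, realised by sets $A_p=F_p\cap L$ with $F_p\in\cF$ and $A_p\subsetneq A_{p'}$ whenever $p<_P p'$. For each $p$ put $G_p:=A_p\cup([n]\setminus L)$. Since $F_p=A_p\cup(F_p\setminus L)\subseteq G_p$ and $\cF$ is upward closed, each $G_p\in\cF$; and because the $G_p$ agree off $L$ while $G_p\cap L=A_p$, the inclusion pattern of the $A_p$ is inherited by the $G_p$. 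Thus the $G_p$ form a copy of $P$ inside $\cF$, contradicting $P$-freeness. Hence $\cF$ is trace $P$-free, giving $Tr(n,P)\ge La_U(n,P)$, and combining the two bounds yields $Tr(n,P)\ge\max\{La_D(n,P),La_U(n,P)\}$.

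An alternative route for the upward closed case is complementation: writing $\cG=\{[n]\setminus F:F\in\cF\}$ turns $\cF$ into a downward closed $P^\partial$-free family, where $P^\partial$ is the dual poset, and since $\cG|_L$ is the complement within $L$ of $\cF|_L$, a copy of $P$ in $\cF|_L$ would produce a copy of $P^\partial$ in the $P^\partial$-free trace $\cG|_L$, a contradiction. Both arguments are elementary, and no genuine obstacle arises; the only point requiring a moment's care is that, unlike the downward closed case, the trace of an upward closed family is not itself a subfamily, which is precisely why the lifting (or complementation) step is needed.
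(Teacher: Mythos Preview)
Your proof is correct and matches the paper's approach. The paper does not give a separate proof of this proposition; it merely records it as the general form of the observation used in the lower bound of \tref{B}, namely that a downward closed $P$-free family is automatically trace $P$-free because each trace is a subfamily. Your downward closed argument is exactly this, and you additionally supply the upward closed case (via lifting or, equivalently, complementation), which the paper leaves implicit.
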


Let $x(n,P)$ be the largest integer such that $\binom{[n]}{\le x(n,P)}$ does not contain $P$. It is easy to see that $x(n,P)$ is monotone decreasing in $n$, so we can define its limit $x(P)$ and $x(n,P)=x(P)$ for $n$ large enough. It is easy to see that $$La_D(n,P)\ge \sum_{i=0}^{x(n,P)}\binom{n}{i}\ge \sum_{i=0}^{x(P)}\binom{n}{i}.$$ 
Remember that $y(P)$ is the largest integer such that $2^{[y(P)]}$ does not contain $P$. If the size of a family $\cF\subseteq 2^{[n]}$ is larger than $\sum_{i=0}^{y(P)}\binom{n}{i}$, then by \tref{sauer} it contains a subset $X$ of size $y(P)+1$ such that $\cF|_X=2^X$ holds. Obviously $2^X$ contains a copy of $P$ by the definition of $y(P)$, thus we have $Tr(n,P)\le\sum_{i=0}^{y(P)}\binom{n}{i}$. By the observations above we have $$\sum_{i=0}^{x(P)}\binom{n}{i}\le La_D(n,P) \le Tr(n,P) \le \sum_{i=0}^{y(P)}\binom{n}{i}.$$

If for a poset $P$ we have $x(P)=y(P)$, then $Tr(n,P)=\sum_{i=0}^{x(P)}\binom{n}{i}$. In particular, 

\begin{proposition} If a poset $P$ has a unique maximum element, then $$Tr(n,P)=\sum_{i=0}^{x(P)}\binom{n}{i}.$$

\end{proposition}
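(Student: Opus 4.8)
The plan is to reduce everything to the single equality $x(P)=y(P)$, since the chain of inequalities $\sum_{i=0}^{x(P)}\binom{n}{i}\le La_D(n,P)\le Tr(n,P)\le \sum_{i=0}^{y(P)}\binom{n}{i}$ established above shows that $Tr(n,P)=\sum_{i=0}^{x(P)}\binom{n}{i}$ whenever $x(P)=y(P)$ holds. We already know $x(P)\le y(P)$ in general: indeed $2^{[y(P)]}=\binom{[y(P)]}{\le y(P)}$ is $P$-free by the definition of $y(P)$, so $x(y(P),P)=y(P)$, and since $x(n,P)$ is monotone decreasing in $n$ we get $x(P)=\lim_{n}x(n,P)\le x(y(P),P)=y(P)$. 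Thus the whole content is the reverse inequality $x(P)\ge y(P)$, which is where the hypothesis that $P$ has a greatest element will be used.

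Write $m=y(P)$ and let $u$ denote the greatest element of $P$. I claim that $\binom{[n]}{\le m}$ is $P$-free for every $n$; granting this, $x(n,P)\ge m$ for all $n$ and hence $x(P)\ge m=y(P)$, which finishes the proof. To establish the claim, suppose for contradiction that some $n$ admits an embedding $i\colon P\to \binom{[n]}{\le m}$ realizing a copy of $P$. Since $u$ is the greatest element of $P$, we have $p<_P u$ for every $p\ne u$, whence $i(p)\subsetneq i(u)$ for all such $p$. Setting $X:=i(u)$, every set of the copy therefore lies in $2^{X}$, and $|X|\le m$.

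It remains to observe that $2^{X}$ is isomorphic to $2^{[|X|]}$ with $|X|\le m=y(P)$, and that $2^{[j]}$ is $P$-free for every $j\le y(P)$: if $2^{[j]}$ contained a copy of $P$, then so would the larger cube $2^{[y(P)]}$, contradicting the definition of $y(P)$. Hence $2^{X}$ is $P$-free, which contradicts the fact that it contains the copy of $P$ produced above. This contradiction proves the claim, and with it the proposition.

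I do not expect any serious obstacle here: the only point requiring care is the observation that a greatest element forces all members of any copy of $P$ to be contained in the single image $i(u)$, thereby confining the entire copy to the small cube $2^{i(u)}$ and reducing the statement to the already-established $P$-freeness of $2^{[j]}$ for $j\le y(P)$. Everything else is bookkeeping with the definitions of $x(P)$ and $y(P)$ and the stated monotonicity of $x(n,P)$ in $n$.
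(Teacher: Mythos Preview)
Your proof is correct and follows exactly the approach the paper has in mind: the paper establishes the chain $\sum_{i=0}^{x(P)}\binom{n}{i}\le La_D(n,P)\le Tr(n,P)\le \sum_{i=0}^{y(P)}\binom{n}{i}$ and then states the proposition as an immediate consequence of $x(P)=y(P)$, leaving the verification of this equality (via the observation that a greatest element confines any copy of $P$ to a cube of size at most $y(P)$) implicit. You have spelled out precisely that implicit step.
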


\section{$l$-traces}

We start this section by showing the monotonicity of $Tr_l(n,P)$ in $l$.

\begin{proof}[Proof of \pref{monotone}.]
Let $E(P)\le k \le l$ and suppose $\cF\subseteq 2^{[n]}$ is $k$-trace $P$-free. We claim that $\cF$ is $l$-trace $P$-free. Assume otherwise. Then there exists an $l$-subset $X$ of $[n]$ such that $\cF|_X$ contains a copy $F_1|_X,F_2|_X,\dots, F_{|P|}|_X$ of $P$. For every edge $e$ of the Hasse diagram $H(P)$ let $x_e$ be an element from $F_i|_X\setminus F_j|_X$ if $F_i|_X$ and $F_j|_X$ are the sets corresponding to the end vertices of $e$. As $E(P)\le k$, we obtain that $|\{x_e:e\in E(H(P))\}|\le k$. Therefore, for any $k$-set $Y$ containing $\{x_e:e\in E(H(P))\}$ we have that $F_1|_Y,F_2|_Y,\dots,F_{|P|}|_Y$ form a copy of $P$ in $\cF|_Y$ contradicting the $k$-trace $P$-free property of $\cF$.
\end{proof}

We continue with the proof of \tref{2level}.

 \begin{proof}[Proof of \tref{2level} (i)]
 To see the lower bound consider the following well-known construction. Let us partition $$\binom{[n]}{\lfloor n/2\rfloor}=\cF_1\cup \cF_2 \cup \dots \cup \cF_n$$ such that $$\cF_i:=\left\{F\in \binom{[n]}{\lfloor n/2\rfloor}: \sum_{j\in F}j \equiv i \ (\text{mod}\ n) \right\}.$$ Let $\cF$ be the union of the $s$ largest $\cF_i$'s, and therefore we have $$|\cF|\ge \frac{s}{n}\binom{[n]}{\lfloor n/2\rfloor}.$$ For any $x \in [n]$, the trace $\cF_{[n]\setminus \{x\}}$ contains sets of size $\lfloor n/2\rfloor$ and $\lfloor n/2\rfloor-1$, so a copy of $\bigvee_s$ would be possible if there existed an $(\lfloor n/2\rfloor-1)$-set $G$ that is contained in at least $s+1$ sets of $\cF$. By construction, there is no such $G$, therefore $\cF$ is $(n-1)$-trace $\bigvee_s$-free.
 
To prove the upper bound let $\cF\subseteq 2^{[n]}$ be an $(n-1)$-trace $\bigvee_s$-free family. Let $\cF_1=\cF\cap \binom{[n]}{\le n/2+n^{2/3}}\cap \binom{[n]}{\ge n/2-n^{2/3}}$. Note that $\cF_1$ cannot contain a chain of length $s+1$ as omitting an element of its smallest set would result in a $(s+1)$-chain in the trace contradicting the $(n-1)$-trace $\bigvee_s$-free property. Therefore $\cF_1$ contains an antichain $\cF_2$ with $$|\cF_2|\ge |\cF_1|/s.$$ We will bound the size of $\cF_2$ using the Lubell-function $$\lambda(\cF_2)=\sum_{F\in \cF_2}\frac{1}{\binom{n}{|F|}}.$$ To this end we will count the number of pairs $(F,\cC)$ with $\cC$ being a maximal chain in $[n]$ and $F\in \cF_2\cap \cC$. We will denote by $\bC$  the set of all maximal chains in $[n]$.
 
 Let us consider $\cG$, the \textit{shadow} of $\cF_2$, $$\cG:=\{G:\exists x\in F\in \cF_2: G=F\setminus \{x\}\}$$ and for a set $G\in  \cG$ let $$\bC_G:=\{\cC\in \bC: G\in \cC\}.$$
 
 \begin{claim}\label{clm:dama}
 For every chain $\cC\in \bC$ there exist at most $s$ sets $G\in \cG$ with $\cC\in \bC_G$.
 \end{claim}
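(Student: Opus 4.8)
The plan is to argue by contradiction, using the hypothesis that $\cF$ is $(n-1)$-trace $\bigvee_s$-free. Suppose some maximal chain $\cC$ contained $s+1$ sets of $\cG$. Since the members of $\cC$ are pairwise comparable, I would list these as $G_0\subsetneq G_1\subsetneq\dots\subsetneq G_s$. By the definition of the shadow, for each $i$ I can fix a set $F_i\in\cF_2$ and an element $x_i\in F_i$ with $G_i=F_i\setminus\{x_i\}$, so that $F_i=G_i\cup\{x_i\}$ and $x_i\notin G_i$.

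The key move is to delete the single element $x_0$, i.e.\ to restrict to $L:=[n]\setminus\{x_0\}$, which is an $(n-1)$-set. This choice does two things at once. First, $F_0|_L=G_0$, so the smallest shadow set $G_0$ is realized as a trace of the member $F_0\in\cF$. Second, for every $j\in\{1,\dots,s\}$ the chain relation $G_0\subseteq G_j\subsetneq F_j$ together with $x_0\notin G_0$ yields $G_0\subseteq F_j\setminus\{x_0\}=F_j|_L$, and the count $|F_j|_L|\ge|F_j|-1=|G_j|>|G_0|$ makes this containment strict. Thus $G_0$ sits strictly below each of $F_1|_L,\dots,F_s|_L$ in $\cF|_L$.

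It then remains to verify that $F_1|_L,\dots,F_s|_L$ are pairwise distinct, so that $G_0$ together with these $s$ sets is a genuine copy of $\bigvee_s$ rather than a smaller configuration. This is the one point at which the antichain property of $\cF_2$ is essential: if $F_j|_L=F_{j'}|_L$ for some $j\ne j'$, then $F_j$ and $F_{j'}$ can differ only in the element $x_0$, which forces one of them to contain the other and contradicts the fact that $\cF_2$ is an antichain. Granting this, $\cF|_L$ contains a copy of $\bigvee_s$, contradicting the $(n-1)$-trace $\bigvee_s$-free property of $\cF$, and the claim follows.

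I expect the last step to be the only real obstacle: the containments $G_0\subsetneq F_j|_L$ are immediate from the nesting once $x_0$ is deleted, but producing $s$ distinct upper sets together with a lower set that is itself a legitimate trace is exactly what dictates deleting the distinguished element $x_0$ of the smallest parent $F_0$ (rather than an arbitrary element), and what forces the use of the antichain property of $\cF_2$.
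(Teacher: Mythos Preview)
Your argument is correct and is essentially identical to the paper's own proof: both assume $s+1$ shadow sets on a chain, delete the element $x$ coming from the parent of the smallest one, and use the antichain property of $\cF_2$ to ensure the $s$ larger traces are pairwise distinct and strictly above the bottom trace. You have merely spelled out the strictness and distinctness verifications in more detail than the paper does.
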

 
 \begin{proof}[Proof of claim.]
 Suppose to the contrary that $\cC\in \bC$ contains $G_1\subsetneq G_2\subsetneq \dots \subsetneq G_{s+1}$ with $G_i\in \cG$ for all $1\le i \le s+1$. Then there exist $x_1,x_2,\dots, x_{s+1}$ with $x_i\notin G_i$ and $F_i:=G_i\cup \{x_i\}\in \cF_2$. But then we have $$F_i|_{[n]\setminus \{x_1\}}\supsetneq F_1|_{[n]\setminus \{x_1\}}=G_1$$ for $i=2,3,\dots,s+1$ and they are all different, as $\cF_2$ is an antichain, thus these form a copy of $\bigvee_s$.
 \end{proof}
 
 \begin{claim}\label{clm:bubi}
 For every $G\in \cG$ there exist at most $s$ sets $F\in \cF_2$ with $G\subseteq F$.
 \end{claim}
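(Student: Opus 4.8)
The plan is to mirror the argument used for \clref{dama}: this claim is the ``dual'' statement, bounding the number of sets of $\cF_2$ lying above a fixed shadow set rather than the number of shadow sets lying on a fixed chain, and the same deletion trick produces a forbidden $\bigvee_s$ in an $(n-1)$-trace. First I would fix $G\in\cG$ and, by the definition of the shadow, write $G=F_0\setminus\{x_0\}$ for some $F_0\in\cF_2$ and some $x_0\in F_0$. Assume for contradiction that there are $s+1$ sets of $\cF_2$ containing $G$; note that $F_0$ is automatically one of them, since $G\subsetneq F_0$. The goal is to exhibit a copy of $\bigvee_s$ in the trace $\cF|_{[n]\setminus\{x_0\}}$, which contradicts the $(n-1)$-trace $\bigvee_s$-free property of $\cF$.

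I would take $G$ itself as the bottom element of the would-be $\bigvee_s$, observing that $F_0|_{[n]\setminus\{x_0\}}=F_0\setminus\{x_0\}=G$. It then remains to show that each of the other $s$ sets $F\in\cF_2$ with $G\subseteq F$ and $F\neq F_0$ projects to a set $F\setminus\{x_0\}$ that strictly contains $G$. Here the antichain property of $\cF_2$ does the work: since $F$ and $F_0$ are distinct members of the antichain $\cF_2$, they are incomparable, so $F\not\subseteq F_0=G\cup\{x_0\}$, whence $F\setminus\{x_0\}\neq G$; combined with $G\subseteq F$ and $x_0\notin G$ this gives $F\setminus\{x_0\}\supsetneq G$, as required.

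The last point to check — and the only place any care is needed — is that these $s$ traces $F\setminus\{x_0\}$ are pairwise distinct, so that they genuinely form the $s$ maximal elements of a $\bigvee_s$. If two of them coincided, say $F\setminus\{x_0\}=F'\setminus\{x_0\}$ with $F\neq F'$, then $F$ and $F'$ would differ only in the element $x_0$, hence one would contain the other, again contradicting that $\cF_2$ is an antichain. Thus $G$ together with these $s$ distinct strict supersets is a copy of $\bigvee_s$ in $\cF|_{[n]\setminus\{x_0\}}$, the desired contradiction. The main obstacle is precisely the potential degeneracy of the deletion map — that some superset of $G$ could collapse onto $G$, or two distinct supersets could collapse onto each other, after removing $x_0$ — and the antichain property of $\cF_2$ is exactly what rules both collapses out.
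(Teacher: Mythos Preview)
Your proof is correct and follows essentially the same approach as the paper: delete the element $x_0$ witnessing $G\in\cG$ and exhibit a $\bigvee_s$ in the trace on $[n]\setminus\{x_0\}$, using the antichain property of $\cF_2$ to rule out collapses. The only cosmetic difference is that the paper first observes that every $F'\in\cF_2$ with $G\subseteq F'$ and $F'\neq F_0$ must satisfy $x_0\notin F'$ (otherwise $F_0=G\cup\{x_0\}\subseteq F'$), so the traces $F'|_{[n]\setminus\{x_0\}}$ are simply the sets $F'$ themselves, making both the strict-containment and distinctness checks immediate; your argument handles the a priori possibility $x_0\in F'$ directly, which costs an extra line but is just as valid.
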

 
  \begin{proof}[Proof of claim.]
 As $G\in \cG$ there exists an $x\notin G$ with $F=G\cup \{x\}\in\cF_2$. As $\cF_2$ is an antichain, any other $F'\in \cF_2$ with $G\subseteq F'$ must not contain $x$. So if there were $F_1,F_2,\dots, F_s\in \cF_2$ other than $F$ all containing $G$, then the traces of $F,F_1,F_2,\dots, F_s\in \cF_2$ on $[n]\setminus \{x\}$ would form a copy of $\bigvee_s$.
 \end{proof}
Let us now count the number of pairs $(F,\cC)$ with $\cC$ being a maximal chain in $[n]$ and $F\in \cF_2\cap \cC$. On the one hand it is $$\sum_{F\in \cF_2}|F|!(n-|F|)!.$$ On the other hand it is at most $$\sum_{G\in \cG}\sum_{\cC\in \bC_G}\sum_{F\in \cC\cap \cF_2}1.$$ As $\cF_2$ is an antichain, no chain $\cC\in \bC_G$ can contain a set $F\in \cF_2$ with $F\subseteq G$. Therefore, by \clref{bubi} and the condition that $\cF_2$ consists only of sets of size from $[n/2-n^{2/3},n/2+n^{2/3}]$, we have $$\sum_{\cC\in \bC_G}\sum_{F\in \cC\cap \cF_2}1\le \frac{s}{n-|G|}|\bC_G|\le \frac{3s}{n}|\bC_G|.$$ \clref{dama} yields $$\sum_{G\in \cG}|\bC_G|\le s|\bC|=s\cdot n!,$$ and thus we obtain
 \[
 \sum_{F\in \cF_2}|F|!(n-|F|)!\le \frac{3s^2}{n}n!.
 \] 
 Dividing by $n!$ gives
 \[
 \lambda(\cF_2)\le \frac{3s^2}{n},
 \]
 and thus $$|\cF_2|\le \frac{3s^2}{n}\binom{n}{\lfloor n/2\rfloor},$$ which implies $$|\cF_1|\le \frac{3s^3}{n}\binom{n}{\lfloor n/2\rfloor}.$$ As $|\binom{[n]}{\le n/2-n^{2/3}} \cup \binom{[n]}{\le n/2+n^{2/3}}| = o(\frac{1}{n}\binom{n}{\lfloor n/2\rfloor})$, the proof of \tref{2level} (i) is finished.
 \end{proof}

\begin{proof}[Proof of \tref{2level} \textbf{(iii)}]
 The statement follows from part (i) and the following claim. We denote by $K_{r,1,s}$ the poset on $r+1+s$ elements $a_1,a_2,\dots,a_r,c,b_1,b_2,\dots, b_s$ with $a_i< c< b_j$ for any $1\le i\le r$ and $1\le j \le s$. 
 
\begin{claim}
 For any pair $r,s\ge 2$ of positive integers, the inequality $$Tr_{n-2}(n,K_{r,1,s})\le Tr_{n-1}(n,\bigvee\nolimits_{2s+1})+Tr_{n-1}(n,\bigwedge\nolimits_{2r+1})$$ holds.
\end{claim}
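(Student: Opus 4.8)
The plan is to partition $\cF$ according to how many ``trace\nobreakdash-supersets'' each member has and to bound the two parts separately. Writing $\overline x=[n]\setminus\{x\}$, for $F\in\cF$ set
$$\cF^+_x(F):=\{\,G|_{\overline x}:G\in\cF,\ F|_{\overline x}\subsetneq G|_{\overline x}\,\},\qquad u^*(F):=\max_{x\in[n]}|\cF^+_x(F)|,$$
so that $u^*(F)$ counts the largest number of distinct traces properly containing a trace of $F$, over all single deletions. I would then take
$$\cA:=\{F\in\cF:u^*(F)\le 2s\},\qquad \cB:=\{F\in\cF:u^*(F)\ge 2s+1\},$$
giving $\cF=\cA\sqcup\cB$ and $|\cF|=|\cA|+|\cB|$. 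It therefore suffices to prove that $\cA$ is $(n-1)$-trace $\bigvee\nolimits_{2s+1}$-free and that $\cB$ is $(n-1)$-trace $\bigwedge\nolimits_{2r+1}$-free, since then $|\cA|\le Tr_{n-1}(n,\bigvee\nolimits_{2s+1})$ and $|\cB|\le Tr_{n-1}(n,\bigwedge\nolimits_{2r+1})$, which is the asserted inequality.

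The first half is immediate from the definition of $\cA$: a copy of $\bigvee\nolimits_{2s+1}$ in $\cA|_{\overline x}$ would be a bottom set $C|_{\overline x}$ together with $2s+1$ distinct proper supersets lying in $\cA|_{\overline x}\subseteq\cF|_{\overline x}$, which forces $|\cF^+_x(C)|\ge 2s+1$ and hence $C\in\cB$, a contradiction. So no such copy exists.

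The heart of the argument is the second half. Suppose $\cB|_{\overline y}$ contained a copy of $\bigwedge\nolimits_{2r+1}$ with top $M|_{\overline y}$ for some $M\in\cB$. Since $M\in\cB$, there is an $x$ with $|\cF^+_x(M)|\ge 2s+1$; thus $M|_{\overline x}$ has at least $2s+1$ distinct proper trace-supersets while $M|_{\overline y}$ has at least $2r+1$ distinct proper trace-subsets. Assuming $x\ne y$, I would pass to the $(n-2)$-trace on $L:=[n]\setminus\{x,y\}$ and exhibit $M|_L$ as the middle of a copy of $K_{r,1,s}$, contradicting the $(n-2)$-trace $K_{r,1,s}$-freeness of $\cF$. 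The key observation is that deleting the further element $y$ from the traces on $\overline x$ is at most two-to-one on set-values and preserves containments: the $\ge 2s+1$ supersets collapse to at least $\lceil(2s+1)/2\rceil=s+1$ distinct supersets of $M|_L$, at most one of which can equal $M|_L$, leaving at least $s$ \emph{proper} supersets; symmetrically, deleting $x$ from the $\ge 2r+1$ subsets on $\overline y$ leaves at least $r$ proper subsets of $M|_L$. Since the supersets properly contain $M|_L$ and the subsets are properly contained in it, all $r+1+s$ traces are distinct and form the required $K_{r,1,s}$. The degenerate case $x=y$ is easier: then $M|_{\overline x}$ already has at least $2s+1\ge s$ proper supersets and $2r+1\ge r$ proper subsets, yielding $K_{r,1,s}$ in an $(n-1)$-trace, which is excluded because (for $n$ large, so that $E(K_{r,1,s})=r+s\le n-2$) an $(n-2)$-trace $K_{r,1,s}$-free family is also $(n-1)$-trace $K_{r,1,s}$-free by \pref{monotone}.

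The main obstacle, and the reason the constants are $2s+1$ and $2r+1$ rather than $s+1$ and $r+1$, is precisely the loss incurred when one single-deletion trace is refined to the common two-deletion trace: deleting a further element can merge two distinct sets and can collapse a proper containment into an equality. The factor $2$ absorbs the at-most-two-to-one merging, while the ``$+1$'' absorbs the single superset (resp.\ subset) that may fall onto $M|_L$ itself, so that after combining the witness on $\overline x$ with the witness on $\overline y$ one still retains exactly the $r$ subsets and $s$ supersets needed to realize $K_{r,1,s}$.
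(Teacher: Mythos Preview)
Your proof is correct and follows essentially the same approach as the paper's: both arguments isolate those $F\in\cF$ that can serve as the bottom of a $\bigvee_{2s+1}$ in some $(n-1)$-trace (your set $\cB$), bound the complement by $Tr_{n-1}(n,\bigvee_{2s+1})$, find a $\bigwedge_{2r+1}$ in an $(n-1)$-trace of $\cB$, and merge the two witnesses at the common middle vertex via the same at-most-two-to-one collapse. Your explicit partition $\cF=\cA\sqcup\cB$ and your separate treatment of the degenerate case $x=y$ (which the paper passes over silently) make the presentation a bit cleaner, but the underlying argument is identical.
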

 
 \begin{proof}[Proof of claim]
Let $\cF \subseteq 2^{[n]}$ be a family of size $Tr_{n-1}(n\bigvee_{2s+1})+Tr_{n-1}(n,\bigwedge_{2r+1})+1$. We can find pairs $(F_i,x_i)$ for $1\le i \le Tr_{n-1}(n,\bigwedge_{2r+1})+1$ and $F_i \in \cF$, $x_i \in [n]$ such that all $F_i$'s are distinct and $F_i|_{[n]\setminus x_i}$ is the bottom element of a copy of $\bigvee_{2s+1}$ in $\cF|_{[n]\setminus \{x_i\}}$. Therefore there exists a $y\in[n]$ such that $$\{F_i|_{[n]\setminus \{y\}}:1\le i \le Tr_{n-1}(n,\bigwedge\nolimits_{2r+1})+1\}$$ contains a copy of $\bigwedge_{2r+1}$, say $$F_1|_{[n]\setminus \{y\}}, F_2|_{[n]\setminus \{y\}}, \dots, F_{2r+2}|_{[n]\setminus \{y\}}$$ with $F_1|_{[n]\setminus \{y\}}$ being the top element. We claim that $\cF|_{[n]\setminus \{x_1,y\}}$ contains a copy of $K_{r,1,s}$. Indeed, let $$F_1, G_1,G_2,\dots, G_{2s+1}\in \cF$$ be sets the traces of which form a copy of $\bigvee_{2s+1}$ on $[n]\setminus \{x_1\}$ (these sets exist by the definition of $F_1$ and $x_1$). As removing one element may cause at most 2 sets to have the same trace, $F_1$ and at least $s$ of the $G_i$'s will have distinct traces on  $[n]\setminus \{x_1,y\}$ and thus will form a copy of $\bigvee_r$ with $F_1|_{[n]\setminus \{x_1,y\}}$ being the bottom element. The same reasoning shows that we can pick $r$ of $F_2,F_3,\dots, F_{2r+2}$ such that their traces on $[n]\setminus \{x_1,y\}$ together with $F_1|_{[n]\setminus\{x_1,y\}}$ form a copy of $\bigwedge_r$ with $F_1|_{[n]\setminus\{x_1,y\}}$ being the top element. Putting these copies of $\bigvee_s$ and $\bigwedge_r$ together, we obtain a copy of $K_{r,1,s}$.
\end{proof}
Note that $K_{r,s}$ is a subposet of $K_{r,1,s}$, hence \tref{2level} \textbf{(iii)} is proved.
 \end{proof}

 Let $T$ be a tree poset with a unique maximum element $m$. We define two new posets obtained from $T$. Let $T^k$ denote the poset obtained from $T$ by replacing $m$ with an antichain of size $k$. 
Equivalently, $$T^k=T\setminus \{m\}\cup \{m_1,m_2,\dots,m_k\}$$ such that the $m_i$'s form an antichain, for any $t\in T\setminus \{m\}$ and $1\le i\le k$ we have $t<_{T^k}m_i$ and for any $t,t'\in T\setminus \{m\}$ we have $t<_{T^k} t'$ if and only if $t<_T t'$. Note that $T^k$ is not a tree poset unless there is a unique element of $T$ that precedes $m$. Also, if $T^k$ is not a tree poset, then $e(T^k)=e(T)+1=h(T)$.
 
Let $T^{\otimes r}$ be the tree poset defined recursively (with respect to its height) in the following way: if $T=P_1$ is the poset with one element, then $T^{\otimes r}=P_1$ for any $r$. Otherwise, if the maximum element $m$ of $T$ has $c$ children in its Hasse-diagram and the posets below its children are $T_1,T_2,\dots, T_c$, then the maximum element of $T^{\otimes r}$ has $c\cdot r$ children $m_1,m_2,\dots, m_{cr}$ such that $m_{(j-1)r+i}$ is the maximum element of a poset isomorphic to $T^{\otimes r}_j$ for every $1\le j\le c$ and $1\le i \le r$.

 \begin{theorem}\label{thm:tree}
 For any integer $s$ and tree poset $T$ with a unique maximum element we have $$Tr_{n-1}(n,T^s)=(e(T)+o(1))\binom{n}{\lfloor n/2\rfloor}.$$
 \end{theorem}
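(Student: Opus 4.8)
The plan is to prove matching lower and upper bounds. The lower bound is immediate from the general observation preceding \cjref{ltrace}. We are in the operative case where the maximum $m$ of $T$ has at least two children, so that $T^s$ is not a tree poset and $e(T^s)=e(T)+1$ (if $m$ had a single child, then $T^s$ would be a tree and the answer would be smaller, as \tref{2level} (i) shows for $T=P_2$, $T^s=\bigvee_s$). Taking $\cF$ to be the union of $e(T)$ consecutive middle levels, for every $x\in[n]$ the trace $\cF|_{[n]\setminus\{x\}}$ equals a union of $e(T)+1=e(T^s)$ consecutive levels of $[n]\setminus\{x\}$, which is $T^s$-free by the very definition of $e(T^s)$. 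Hence $Tr_{n-1}(n,T^s)\ge (e(T)+o(1))\binom{n}{\lfloor n/2\rfloor}$.

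For the upper bound, let $\cF\subseteq 2^{[n]}$ be $(n-1)$-trace $T^s$-free. As usual I would first discard the sets whose size lies outside $[n/3,2n/3]$, losing only $o(\tfrac1n\binom{n}{\lfloor n/2\rfloor})$ sets. The target constant is $e(T)=h(T)-1$, and the natural benchmark is \tref{bukh}: since $T^{\otimes s'}$ is a tree poset with $h(T^{\otimes s'})=h(T)$ for every $s'$, Bukh's theorem gives $La(n,T^{\otimes s'})=(h(T)-1+o(1))\binom{n}{\lfloor n/2\rfloor}=(e(T)+o(1))\binom{n}{\lfloor n/2\rfloor}$. Thus it suffices to reduce the trace condition to an honest forbidden-subposet condition for the bushy tree $T^{\otimes s'}$.

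The heart of the argument is therefore a transfer lemma: for $s'$ chosen large in terms of $s$ and $|T|$, a copy of $T^{\otimes s'}$ in $\cF$ should force a copy of $T^s$ in some trace $\cF|_{[n]\setminus\{x\}}$. The mechanism I would use is that deleting a single element $x$ identifies at most pairs of sets, so a sufficiently bushy embedded tree retains, after the deletion, the full body $T\setminus\{m\}$ together with many pairwise trace-distinct sets lying above it; choosing $x$ inside the appropriate symmetric differences then exhibits $s$ of these as the antichain of maxima of a copy of $T^s$. Concretely, at the top vertex $m$ one has $c\cdot s'$ children grouped into $c$ blocks, and the $s'$-fold redundancy inside each block is exactly what guarantees that $s$ suitable extensions survive the single identification. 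Granting this lemma, $\cF$ is $T^{\otimes s'}$-free and Bukh's bound finishes the proof.

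The main obstacle is precisely this transfer lemma, and it is more delicate than the sketch suggests. A single-element deletion can collapse a set together with all of its codimension-one subsets, and such a downward star is $(n-1)$-trace $T^s$-free while containing arbitrarily large copies of the bushy tree; hence the implication ``$\cF$ contains $T^{\otimes s'}$ $\Rightarrow$ some trace contains $T^s$'' fails verbatim, and genuine redundancy is essential. I would handle this by a cleaning step: before applying the lemma, remove an $o(\binom{n}{\lfloor n/2\rfloor})$ collection of merge-pathological sets, namely those participating in too many covering pairs $F\subsetneq F\cup\{y\}$, and then show that in the cleaned family every embedded $T^{\otimes s'}$ sits in sufficiently general position that deleting one coordinate cannot simultaneously kill all $s'$ copies in a block. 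Equivalently, one may run Bukh's random-chain embedding directly inside a random trace and argue that, with $s'$ large, a copy of $T^s$ appears with positive probability. Making the bookkeeping of these single-coordinate identifications uniform over all $n$ choices of the deleted element, while keeping the discarded part of size $o(\binom{n}{\lfloor n/2\rfloor})$, is the step I expect to require the most care.
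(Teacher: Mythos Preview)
Your lower bound is fine and matches the paper.

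Your upper-bound plan has a genuine gap, and the paper's argument avoids it by a different route. You yourself observe that the direct transfer lemma ``a copy of $T^{\otimes s'}$ in $\cF$ forces a copy of $T^s$ in some trace'' is false (the downward star counterexample). Your proposed remedy is to pre-clean $\cF$ by removing sets in too many covering pairs, but this is only a hope: you give no quantitative bound showing the cleaned part has size $o\bigl(\binom{n}{\lfloor n/2\rfloor}\bigr)$, and no argument that after cleaning the transfer lemma actually holds. As written, the proof is incomplete at precisely the crucial step.

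The paper sidesteps this difficulty entirely. It never tries to show that $\cF$ is $T^{\otimes s'}$-free. Instead it proves the inequality
\[
Tr_{n-1}(n,T^s)\le La(n,T^{\otimes 2})+Tr_{n-1}(n,\bigvee\nolimits_{s-1})+1,
\]
and then invokes \tref{bukh} and \tref{2level}~(i). The mechanism is a two-stage greedy extraction: as long as $|\cF|>La(n,T^{\otimes 2})$ one can repeatedly peel off the top set of a copy of $T^{\otimes 2}$ in $\cF$, obtaining a subfamily $\cF'$ of more than $Tr_{n-1}(n,\bigvee_{s-1})$ such ``tops''. By definition of $Tr_{n-1}$, some trace $\cF'|_{[n]\setminus\{x\}}$ then contains a copy of $\bigvee_{s-1}$, providing the $s$ maxima of $T^s$. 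Finally, below the bottom element of this $\bigvee_{s-1}$ there sits a copy of $T^{\otimes 2}$ in $\cF$, and the mere doubling at each branching suffices to guarantee that a copy of $T$ survives the single deletion of $x$ (if $G_1,G_2\subsetneq G$ are distinct then at least one of $G_i\setminus\{x\}$ is still a proper subset of $G\setminus\{x\}$). The point is that the $s$-fold redundancy you tried to build into a single bushy tree is instead obtained \emph{externally}, from the already-established bound on $Tr_{n-1}(n,\bigvee_{s-1})$; only $2$-fold redundancy is needed in the tree itself.
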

 
 \begin{proof} The proof relies on the following lemma.
 \begin{lemma}\label{lem:relation}
 For any integer $s$ we have $$Tr_{n-1}(n,T^s)\le La(n, T^{\otimes 2})+Tr_{n-1}(n,\bigvee\nolimits_{s-1})+1.$$
 \end{lemma}
 
 \begin{proof}[Proof of lemma]
 Let $\cF\subseteq 2^{[n]}$ be a family of size $La(n, T^{\otimes 2})+Tr_{n-1}(n,\bigvee_{s-1})+1$. Then $\cF$ contains a copy of $T^{\otimes 2}$. Let $F_1$ be the set of this copy corresponding to the top element of $T^{\otimes 2}$. As $\cF\setminus \{F_1\}$ is still larger than $La(n,T^{\otimes 2})$, we can pick a set $F_2\in \cF\setminus \{F_1\}$ that corresponds to the top element of $T^{\otimes 2}$ in a copy in $\cF\setminus \{F_1\}$. Repeating this, we can obtain sets $F_1,F_2,\dots, F_{Tr_{n-1}(n,\bigvee_{s-1})+1}$ with the property that for every $F_j$ there exists a copy of $T^{\otimes 2}$ in $\cF$ in which they correspond to the top element. Let us write $$\cF':=\{F_1,F_2,\dots, F_{Tr_{n-1}(n,\bigvee_{s-1})+1}\}.$$ By definition, there exists $x\in [n]$ such that $\cF'|_x$ contains a copy of $\bigvee_{s-1}$, say $F_1\setminus \{x\},F_2\setminus \{x\}, \dots, F_s\setminus \{x\}$. We claim that $\cF|_{[n]\setminus \{x\}}$ contains a copy of $T^s$ with $F_1\setminus \{x\},F_2\setminus \{x\}, \dots, F_s\setminus \{x\}$ playing the role of the $s$ top elements of $T^s$. 
 
Indeed, without loss of generality we can assume that $F_s\setminus \{x\}\subsetneq F_i\setminus \{x\}$ holds for all $1\le i \le s-1$. We know that there exists a copy of $T^{\otimes 2}$ in $\cF$ with $F_s$ playing the role of the top of $T^{\otimes 2}$. We claim that we can take some of the sets (including $F_s$) of this copy of $T^{\otimes 2}$ such that their traces on $[n]\setminus \{x\}$ form a copy of $T$ and thus together with  $F_1\setminus \{x\},F_2\setminus \{x\}, \dots, F_{s-1}\setminus \{x\}$ they form a copy of $T^s$ in $\cF_{[n]\setminus \{x\}}$. To see this, we only need to observe that if $G_1,G_2\subsetneq G$ and $G_1\neq G_2$, then for any $y$ at least one of $G_1\setminus \{y\},G_2\setminus \{y\}$ is a strict subset of $G\setminus \{y\}$. So we can pick the sets of the copy of $T$ recursively starting with $F_s$.

Thus we indeed obtained a copy of $T^s$ in $\cF|_{[n]\setminus \{x\}}$.
 \end{proof}
 Now the upper bound in \tref{tree} follows from \lref{relation}, \tref{2level} \textbf{(i)} and \tref{bukh} using the simple observation that the height of $T$ and $T^{\otimes r}$ are the same and therefore we have $e(T)=e(T^{\otimes r})$ for any integer $r$.
 
 The lower bound is due to the general observation made before \cjref{ltrace} that $Tr_{n-1}(n,P)\ge (e(P)-1-o(1))\binom{n}{\lfloor n/2\rfloor}$ holds for any poset $P$.
 \end{proof}

Note that \tref{2level} \textbf{(ii)} follows by applying \tref{tree} to $T=\bigwedge_r$.

\vskip 0.3truecm
In the remainder of this Section, we prove \tref{diamond}. We will use the following lemma.

\begin{lemma}\label{lem:317}
Let $G$ be a graph on $n$ vertices and let $\ell:E\rightarrow \mathbb{R}$ be a labeling of the edges such that in any 4-cycle the edges with the smallest and largest $\ell$-value are adjacent (if there are more edges with smallest or largest $\ell$-value, then all these pairs of edges are adjacent). Then $G$ cannot contain a complete bipartite graph with partite sets of size 3 and 17.
\end{lemma}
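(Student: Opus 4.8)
The plan is to argue by contradiction: suppose $G$ contains a copy of $K_{3,17}$ with parts $A=\{a_1,a_2,a_3\}$ and $B$, $|B|=17$, and to read off information from the hypothesis using only the $4$-cycles of the form $a_i\,b\,a_j\,b'$ (with $i\neq j$, $b\neq b'$) that live inside this $K_{3,17}$. In such a $4$-cycle the two opposite pairs of edges are $\{a_ib,\,a_jb'\}$ and $\{a_jb,\,a_ib'\}$, so the hypothesis says exactly that the smallest and the largest of the four numbers $\ell(a_ib),\ell(a_jb),\ell(a_ib'),\ell(a_jb')$ are never realised together on one of these two opposite pairs.

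To each $b\in B$ I would associate its \emph{type}, the weak ordering of $\{a_1,a_2,a_3\}$ induced by comparing $\ell(a_1b),\ell(a_2b),\ell(a_3b)$ (ties allowed). There are exactly $13$ types ($6$ strict orders, $6$ with one tie, and the all-equal one), so since $17>13$ two vertices $b,b'\in B$ must share a type. The whole lemma then reduces to the claim that \emph{two distinct vertices of $B$ cannot have the same type}, which in fact already forces $|B|\le 13<17$.

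To prove the claim I would fix $b,b'$ of a common type and inspect the three cycles $C_{ij}=a_i\,b\,a_j\,b'$. If the type contains a tie, say $\ell(a_ib)=\ell(a_jb)$ (so also $\ell(a_ib')=\ell(a_jb')$), then the four values on $C_{ij}$ split into two equal adjacent pairs (value $t$ at $b$, value $t'$ at $b'$); if $t\neq t'$ the smaller of them is a minimum sitting opposite a maximum, and if $t=t'$ every edge is simultaneously a minimum and a maximum, so in either case the hypothesis fails. If the type is strict, say $\ell(a_1\cdot)<\ell(a_2\cdot)<\ell(a_3\cdot)$ for both $b,b'$, then after normalising $\ell(a_1b)\le\ell(a_1b')$ I would run a short forcing chain: avoiding an opposite min--max pair on $C_{12}$ forces $\ell(a_2b)>\ell(a_2b')$, the same reasoning on $C_{13}$ forces $\ell(a_3b)>\ell(a_3b')$, and then on $C_{23}$ the unique minimum $\ell(a_2b')$ and the unique maximum $\ell(a_3b)$ sit on opposite edges --- the contradiction.

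The part I expect to be the real obstacle is the careful handling of equalities. The within-type ties define the six tied types, but one must also rule out \emph{cross-ties} between $b$ and $b'$ (such as $\ell(a_1b)=\ell(a_1b')$ or $\ell(a_2b)=\ell(a_2b')$) while running the forcing chain, because under the hypothesis a value shared by an opposite pair of edges instantly produces a minimum or maximum on that pair. The bookkeeping is routine but must be carried out for each of the $13$ types; once every degenerate equality is itself shown to create an opposite min--max pair, the claim holds in all cases, giving $|B|\le 13<17$ and hence the lemma.
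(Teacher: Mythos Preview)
Your argument is correct and in fact proves more than the lemma asks: the pigeonhole on the $13$ weak orderings already forbids $K_{3,14}$, not just $K_{3,17}$. The forcing chain in the strict-type case and the direct contradiction in the tied-type case both go through exactly as you outline, and the cross-ties you worry about (e.g.\ $\ell(a_1b)=\ell(a_1b')$) are harmless: in $C_{12}$ both $a_1b$ and $a_1b'$ are then minima, and whichever of $a_2b,a_2b'$ is a maximum is opposite to one of them. So the ``routine bookkeeping'' really is routine.

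The paper takes a different route. It first observes that one may assume $\ell$ is injective (a small perturbation only selects one of the existing $(\min,\max)$ pairs in each $4$-cycle, so the hypothesis is preserved), and then applies the Erd\H os--Szekeres theorem twice: from the $17$ vertices of $B$ it extracts $5$ on which the labels to a second vertex of $A$ are monotone, and from those $5$ it extracts $3$ on which the labels to the third vertex of $A$ are monotone as well. Two of the three resulting monotone triples are co-monotone, and a short forcing chain on the three $4$-cycles among $A$ and these three vertices gives the contradiction. This explains the constant $17=4^2+1$ (and $5=2^2+1$).

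Your approach is more elementary and yields a sharper constant; the paper's approach is more mechanical and avoids the case analysis on ties by reducing to injective $\ell$ up front. You could combine the two: after the injective reduction there are only $6$ strict types, so your pigeonhole immediately forbids $K_{3,7}$, and the tied-type case and all cross-tie bookkeeping disappear.
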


\begin{proof}[Proof of lemma]
We can assume that $\ell$ is injective as that makes the weakest restriction. Suppose towards a contradiction that $G$ contains 20 vertices $A,B,C$ and $v_1,v_2,\dots,v_{17}$ such that $A,B,C$ are connected to all $v_i$'s. By rearranging, we may assume that $\ell(Av_i)<\ell(Av_j)$ whenever $i<j$. By the famous result of Erd\H os and Szekeres \cite{ESz}, there exist five vertices $v_{i_1},\dots,v_{i_5}$ ($i_1<i_2<i_3<i_4<i_5$) such that the sequence $l(Bv_{i_j}$) $j=1,2,3,4,5$ is monotone. Applying again the Erd\H os -Szekeres result we find three veritces $\alpha,\beta,\gamma$ among the $v_{i_j}$'s such that
$$\ell(A\alpha), \ell(A\beta), \ell(A\gamma);$$ $$\ell(B\alpha), \ell(B\beta), \ell(B\gamma);$$ $$\ell(C\alpha), \ell(C\beta), \ell(C\gamma)$$ all form monotone sequences. So two of these triples are monotone decreasing or increasing. By rearranging if necessary, we may suppose that $$\ell(A\alpha)<\ell(A\beta)<\ell(A\gamma); \ \ \ell(B\alpha)<\ell(B\beta)<\ell(B\gamma); \textrm{ and } \ell(A\alpha)<\ell(B\alpha)$$ hold.

As $A\alpha$ is the smallest labeled edge in the cycles $A\alpha B\beta$ and $A\alpha B\gamma$, using that the smallest and the largest labeled edges must be adjacent, we obtain $l(B\beta)<l(A\beta)$ and $l(B\gamma)<l(A\gamma)$. But then in the cycle $A\beta B\gamma$ we have $l(B\beta)<l(A\beta), l(B\gamma)<l(A\gamma)$, so the smallest labeled edge is $B\beta$ and the largest labeled edge is $A\gamma$, contradicting that these should be adjacent.
\end{proof}

\begin{proof}[Proof of \tref{diamond}]
Let $\cF\subseteq 2^{[n]}$ be an $(n-2)$-trace diamond-free family. As $| \binom{[n]}{\le \lfloor n/2- n^{2/3}\rfloor}\cup \binom{[n]}{\ge \lfloor n/2+ n^{2/3}\rfloor}|= o(\frac{1}{n}\binom{n}{\lfloor n/2\rfloor})$, we may and will assume that all sets of $\cF$ have size from $[n/2-n^{2/3},n/2+n^{2/3}]$.

Let us consider a (symmetric) chain partition $\mathbb{C}$ of $2^{[n]}$, i.e. $\mathbb{C}$ consists of $\binom{n}{\lfloor n/2\rfloor}$ chains $\cC$ such that $\cup_{\cC\in \mathbb{C}}\cC=2^{[n]}$ and for any pair $\cC,\cC'\in \mathbb{C}$ we have $\cC\cap \cC'=\emptyset$. For any $\cC\in \mathbb{C}$ let us define the graph $G_\cC$ with vertex set $[n]$ and edge set $$\{e\in \binom{[n]}{2}:\exists C\in \cC \hskip .4truecm C \cup e \in \cF\}.$$ Let $e_\cC$ denote the number of edges in $\cG_\cC$ and let us bound $\sum_{\cC\in \mathbb{C}}e_\cC$. 

Every $F\in \cF$ contains $\binom{|F|}{2}$ pairs and each of them belongs to different chains. Moreover, for every $\cC$ and every edge $e\in E(G_{\cC})$ there can be at most 3 sets $F\in \cF$ containing $e$ and $F \setminus e\in \cC$ (as otherwise these sets would form a 4-chain, i.e. a special copy of the diamond), so we obtain 
$$\frac{1}{54}n^2|\cF|\le\frac{1}{3}\sum_{F\in \cF}\binom{|F|}{2}\le \sum_{\cC\in \mathbb{C}}e_\cC.$$

On the other hand for any $\cC\in\mathbb{C}$ let us define the labeling $\ell: E(G_\cC)\rightarrow \{0,1,\dots,n\}$ by letting $\ell(e):=|C|$ with $C\in\cC, C\cup e\in \cF$ (if there are more such sets $C$, then take the size of an arbitrary one). Note that $G_\cC$ and the labeling $\ell$ satisfy the conditions of \lref{317}. Indeed, if $e_1,e_2,e_3,e_4$ are consecutive edges of a 4-cycle in $G_\cC$ with $C_1,C_2,C_3,C_4\in \cC$ and $e_i\cup C_i=F_i\in \cF$ such that $|C_1|\le |C_2|,|C_4|\le |C_3|$, then the traces of the $F_i$'s on $[n]\setminus e_1$ form a copy of the diamond poset. \lref{317} implies $G_{\cC}$ does not contain a complete bipartite graph with parts of size 3 and 17. Therefore the celebrated K\H ov\'ari - T. S\'os - Tur\'an theorem \cite{KST} implies $e_\cC=O(n^{2-1/3})$ for all $\cC\in \mathbb{C}$. Summing over $\mathbb{C}$ we obtain
\[
\frac{1}{54}n^2|\cF|=O\left(\binom{n}{\lfloor n/2\rfloor}n^{2-1/3}\right).
\]
Rearranging yields the theorem.
\end{proof}

\section{Concluding remarks}

We finish this article by posing some remarks and problems concerning our results.

\vspace{2mm}

$\bullet$ We conjecture the following about the butterfly poset:

\begin{conjecture} If $n\ge 5$, then
$Tr_{n-1}(n,B)=\binom{n}{\lfloor n/2\rfloor}$.
\end{conjecture}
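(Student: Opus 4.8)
The plan is to take $\cF=\binom{[n]}{\lfloor n/2\rfloor}$, a single middle level of size $\binom{n}{\lfloor n/2\rfloor}$. For any $x\in[n]$ and $Y=[n]\setminus\{x\}$ the trace is $\cF|_Y=\binom{Y}{\lfloor n/2\rfloor}\cup\binom{Y}{\lfloor n/2\rfloor-1}$, i.e.\ two consecutive levels of $Y$. So it suffices to record the elementary fact that for any ground set $Z$ the family $\binom{Z}{j}\cup\binom{Z}{j+1}$ is $B$-free: a copy of $B$ would consist of two distinct $j$-sets $a,b$ each contained in two distinct $(j+1)$-sets $c,d$. From $a,b\subsetneq c$ we get $a=c\setminus\{p\}$, $b=c\setminus\{q\}$ with $p\ne q$; writing $d=a\cup\{r\}=b\cup\{t\}$ and comparing which elements lie outside $c$ forces $r=t$ and then $c\setminus\{p\}=c\setminus\{q\}$, i.e.\ $p=q$, a contradiction. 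Hence $\cF|_Y$ is $B$-free for every $x$, and $Tr_{n-1}(n,B)\ge\binom{n}{\lfloor n/2\rfloor}$.

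\textbf{Upper bound: what is already available and why it is not enough.} The asymptotic bound $|\cF|\le(1+o(1))\binom{n}{\lfloor n/2\rfloor}$ is exactly \tref{2level}\textbf{(ii)} applied with $r=s=2$, since $B=K_{2,2}$; so the entire task is to remove the $o(1)$ \emph{and} to do so without the usual truncation to sizes in $[n/2-n^{2/3},n/2+n^{2/3}]$, which here already costs $\Theta(\tfrac1n\binom{n}{\lfloor n/2\rfloor})$ sets. A natural first attempt, averaging over single deletions, is provably too weak: writing $E(\cF)$ for the number of cover pairs $F\subsetneq F'$ with $|F'|=|F|+1$ in $\cF$, one has the identity $\sum_{x\in[n]}|\cF|_{[n]\setminus\{x\}}|=n|\cF|-E(\cF)$, and combining it with $|\cF|_{[n]\setminus\{x\}}|\le La(n-1,B)$ gives only $|\cF|\le La(n-1,B)+E(\cF)/n$. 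As $La(n-1,B)\sim 2\binom{n-1}{\lfloor(n-1)/2\rfloor}\sim\binom{n}{\lfloor n/2\rfloor}$ already by itself, this cannot reach the exact value. The genuine strength of the hypothesis is that the \emph{one} family must remain $B$-free after \emph{every} deletion simultaneously and after several deletions at once, and this is what must be used.

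\textbf{The route I would pursue.} I would fix a symmetric chain decomposition $\mathbb{C}$ of $2^{[n]}$; it has exactly $\binom{n}{\lfloor n/2\rfloor}$ chains, each meeting the central level $\binom{[n]}{\lfloor n/2\rfloor}$ in a single set, so that $|\cF|=\sum_{\cC\in\mathbb{C}}|\cF\cap\cC|$ and the claim becomes the averaged statement that $\cF$ meets the chains of $\mathbb{C}$ at most once on average. To control the chains carrying two or more sets I would feed in the following local obstruction, extracted from the trace condition: there are no $k$-sets $A_1\ne A_2$ and $(k+1)$-sets $B_1\ne B_2$ in $\cF$ together with an element $x\in(A_1\cap A_2)\setminus(B_1\cup B_2)$ such that $A_i\setminus\{x\}\subseteq B_j$ for all $i,j$ — indeed the four traces $A_1\setminus\{x\},A_2\setminus\{x\}\subsetneq B_1,B_2$ form a copy of $B$ in $\cF|_{[n]\setminus\{x\}}$. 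Configurations of this shape are precisely a $K_{2,2}$ in the containment bipartite graph between the shifted bottom sets and the intact top sets, so they are governed by a K\H ov\'ari--S\'os--Tur\'an / Zarankiewicz-type count; summing such counts over the chain decomposition and over the central levels is how I would bound the ``excess'' contributed by chains meeting $\cF$ more than once.

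\textbf{The main obstacle.} The difficulty, and presumably the reason this is only a conjecture, is purely the \emph{exact} accounting. The forbidden-configuration analysis above is inherently of Zarankiewicz type and hence loses a lower-order term, which is exactly the quantity one must annihilate to land on $\binom{n}{\lfloor n/2\rfloor}$ rather than $(1+o(1))\binom{n}{\lfloor n/2\rfloor}$. I therefore expect the proof to require a stability step — showing that a family of size close to $\binom{n}{\lfloor n/2\rfloor}$ is essentially one level — followed by a local-exchange argument that removes every cover pair and pushes every set lying two or more levels off the centre toward the central level without violating the trace-$B$-free property and without decreasing $|\cF|$. Designing exchange operations that respect all $n$ deletion conditions at once, and proving that the extremal configuration collapses to a single level, is the step I expect to be hard.
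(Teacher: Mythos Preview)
The statement you were asked to prove is not a theorem in the paper; it appears only as a conjecture in the concluding remarks, and the paper gives no proof of the upper bound. There is therefore nothing in the paper against which to compare your argument for that direction.

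Your lower bound is correct and is exactly the standard observation behind $e(B)=2$: two consecutive levels never contain a copy of $B$, so a single middle level is $(n-1)$-trace $B$-free. (Minor wording point: after forcing $r=t$ your conclusion is $a=b$, not literally $p=q$, but of course $a=b$ is already the contradiction.)

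For the upper bound you have, quite reasonably, not produced a proof but a discussion of approaches and obstacles. Your assessment is accurate: the asymptotic $Tr_{n-1}(n,B)=(1+o(1))\binom{n}{\lfloor n/2\rfloor}$ is exactly \tref{2level}\,\textbf{(ii)} with $r=s=2$, and the whole difficulty is the exact constant. The averaging-over-deletions identity and the Zarankiewicz-type counts you sketch are natural first moves, and your diagnosis that they inherently lose a lower-order term is sound. The stability-plus-exchange strategy you outline is plausible but, as you say, the delicate part is designing local moves that preserve the $B$-free property of all $n$ traces simultaneously; the paper offers no hint on how to do this, which is why the statement is left as a conjecture.
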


$\bullet$ We introduced the functions $La_D(n,P)$ and $La_U(n,P)$ as lower bounds on $Tr(n,P)$. They seem to be interesting on their own right, and we are not aware of any earlier study on them.
Natural questions arise about the order of magnitude of $La_D(n,P)$. 

\vspace{2mm}

It is natural to ask if we can find an upper bound on $La_D(n,P)$ using $x(P)$. However, we show a poset $P_m$ for every $m$ such that $x(P_m)=1$ and $La_D(n,P)=\Omega(n^m)$.

Let $(P_m,<)$ consist of a minimal element $a$, $2^m+1$ elements $b_1,\dots,b_{2^m+1}$ with $a< b_i$ for $1\le i\le 2^m+1$ and $m':=\binom{2^m+1}{2}$ elements $c_1,\dots, c_{m'}$ such that for every two different $b_k,b_l$ there is exactly one $c_j$ with $b_k,b_l< c_j$. Observe that we have $x(P_m)=1$ as a family consisting of sets of size at most $2$ is $P_m$-free if and only if its $2$-element sets do not contain a copy of the complete graph $K_m$. On the other hand consider a partition of $[n]$ into $m$ sets $A_1, \dots, A_m$ of almost equal size. Consider the family $\cF$ of sets that intersect every $A_j$ in at most one element. It is obvious that $\cF$ is downward closed and has cardinality $\Omega(n^m)$. We will show it is $P_m$-free.

Suppose by contradiction that $\cF$ contains a copy of $P_m$. Let $\cF_1$ be the subfamily consisting of the sets that correspond to $b_1,\dots, b_{2^m+1}$. If two distinct element of $A_j$ are both contained in members of $\cF_1$, then they are both contained in a set corresponding to $c_k$ for some $k$, which is impossible. Thus $\cup \cF_1$ intersects every $A_j$ in at most one vertex, which implies $|\cup \cF_1| \le m$. Therefore we have $|\cF_1|\le 2^m$, a contradiction.

\vspace{2mm}

$\bullet$ Concerning the connection of $La_D(n,P)$, $La_U(n,P)$ and $Tr(n,P)$, the obvious question is the following: is Proposition \ref{prop3} sharp for $n$ large enough? We know that $6=Tr(3,B)>\max\{La_D(3,B),La_U(3,B)\}=5$, but also that $Tr(n,B)=\max\{La_D(n,B),La_U(n,B)\}$ if $n>4$. The sharpness of Proposition \ref{prop3} would mean that we could use down-compression in forbidden subposet problems for traces, similarly to \tref{down}.

Another possible improvement that would essentially be equivalent to using down- compressions is at \pref{prop2}. Can we replace $Tr$ by $\max\{La_D,La_U\}$ in \pref{prop2}? For the butterfly poset and $n=3$ these are different but $(n,m)\rightarrow (5,9)$ would give the same bound. On the other hand, note that \pref{prop2} is sharp for any poset $P$. Indeed, $Tr(n,P)\ge\min \{m:\exists k\, (n,m)\rightarrow (k,Tr(k,P)+1)\}-1$, as shown by $k=n$. The question is if we can chose a small $k$. More precisely, is there a constant $c(P)$ for every poset $P$ such that determining $Tr(c(P),P)$ and using \pref{prop2} is enough to find $Tr(n,P)$ for every $n$ (like $c(P)=5$ for the butterfly poset)?





\bibliographystyle{acm}
\bibliography{tracesubpo}
\end{document}